\documentclass[dvipdfmx]{article}
\usepackage{amsthm, amscd, amsmath, amsfonts, mathrsfs}
\newtheorem{theo}{Theorem}[section]
\newtheorem{proposition}[theo]{Proposition}
\newtheorem{lemma}[theo]{Lemma}

\newtheorem{rem}[theo]{Remark}
\newtheorem{definition}[theo]{Definition}

\input xy
\xyoption{all}

\begin{document}

\title{A Gerby Deformation of Complex Tori and the Homological Mirror Symmetry}

\author{Kazushi Kobayashi\footnote{Department of Mathematics, Graduate School of Science, Osaka University, Toyonaka, Osaka, 560-0043, Japan. E-mail : k-kobayashi@cr.math.sci.osaka-u.ac.jp. 2020 Mathematics Subject Classification : 14J33, 53C08, 53D37 (primary), 14F08, 53D18 (secondary). Keywords : torus, gerbe, homological mirror symmetry.}}

\date{}

\maketitle

\maketitle

\begin{abstract}
Let $(X,\check{X})$ be a mirror pair of a complex torus $X$ and its mirror partner $\check{X}$. This mirror pair is described as the trivial special Lagrangian torus fibrations $X\rightarrow B$ and $\check{X}\rightarrow B$ on the same base space $B$ by SYZ construction. Then, we can associate a holomorphic line bundle $E(s,\mathcal{L})\rightarrow X$ to a pair $(s,\mathcal{L})$ of a Lagrangian section $s$ of $\check{X}\rightarrow B$ and a unitary local system $\mathcal{L}$ along it. In this paper, we first construct the deformation $X_{\mathcal{G}}$ of $X$ by a certain flat gerbe $\mathcal{G}$ and its mirror partner $\check{X}_{\mathcal{G}}$ from the mirror pair $(X,\check{X})$, and discuss deformations of objects $E(s,\mathcal{L})$ and $(s,\mathcal{L})$ over the deformed mirror pair $(X_{\mathcal{G}},\check{X}_{\mathcal{G}})$.
\end{abstract}

\tableofcontents

\section{Introduction}
\subsection{Motivation}
Let $X^n$ be an $n$-dimensional complex torus and $\check{X}^n$ be its mirror partner. For this mirror pair $(X^n,\check{X}^n)$, the homological mirror symmetry \cite{Kon} states the existence of an equivalence
\begin{equation*}
D^b(Coh(X^n))\cong Tr(Fuk(\check{X}^n))
\end{equation*}
as triangulated categories, where $D^b(Coh(X^n))$ is the bounded derived category of coherent sheaves over $X^n$ and $Tr(Fuk(\check{X}^n))$ is the enhanced triangulated category of the Fukaya category $Fuk(\check{X}^n)$ over $\check{X}^n$ \cite{Fukaya category} in the sense of Bondal-Kapranov-Kontsevich construction \cite{bondal, Kon}. In particular, when we regard the mirror pair $(X^n,\check{X}^n)$ as the trivial special Lagrangian torus fibrations $X^n\rightarrow B$ and $\check{X}^n\rightarrow B$ on the same base space $B$ by SYZ construction \cite{SYZ}, we can associate a holomorphic line bundle $E(s,\mathcal{L})\rightarrow X^n$ with an integrable connection $\nabla_{(s,\mathcal{L})}$ (we sometimes denote this object by $(E(s,\mathcal{L}),\nabla_{(s,\mathcal{L})})$ for simplicity) to each pair $(s,\mathcal{L})$ of a Lagrangian section $s$ of $\check{X}^n\rightarrow B$ and a unitary local system $\mathcal{L}$ along it as objects in the respective categories.

In this paper, we first discuss the deformation $X_{\mathcal{G}}^n$ of $X^n$ by a certain flat gerbe $\mathcal{G}$ and the corresponding mirror dual deformation $\check{X}_{\mathcal{G}}^n$ of $\check{X}^n$. Moreover, we also consider deformations of objects in the respective categories associated to the deformation from $(X^n,\check{X}^n)$ to $(X_{\mathcal{G}}^n,\check{X}_{\mathcal{G}}^n)$. In this section, we denote the deformed objects obtained from $(E(s,\mathcal{L}),\nabla_{(s,\mathcal{L})})$ and $(s,\mathcal{L})$ by $(E(s^{\mathcal{G}},\mathcal{L}^{\mathcal{G}}),\nabla_{(s^{\mathcal{G}},\mathcal{L}^{\mathcal{G}})})$ and $(s^{\mathcal{G}},\mathcal{L}^{\mathcal{G}})$, respectively.

Unfortunately, in this paper, we can not consider ``all'' deformations $X_{\mathcal{G}}^n$ of $X^n$ by ``arbitrary'' gerbes $\mathcal{G}$, and we actually focus on certain flat gerbes only as such gerbes $\mathcal{G}$ here. Below, we explain the reason why we only focus on such cases. 

Let us consider the case of elliptic curves, i.e., take a mirror pair $(X^1,\check{X}^1)$ of elliptic curves. Although we only treat the case of holomorphic line bundles in this paper, we can actually associate a stable vector bundle $\mathcal{E}(r,d)\rightarrow X^1$ such that the rank of $\mathcal{E}(r,d)$ is $r$ and the degree of $\mathcal{E}(r,d)$ is $d$ (with an integrable connection) to each pair $(s_{\frac{d}{r}},\mathcal{L})$ of an affine Lagrangian multi section $s_{\frac{d}{r}}$ whose slope is $\frac{d}{r}$ of $\check{X}^1\rightarrow B$ and a unitary local system $\mathcal{L}$ along it, where $r\in \mathbb{N}$ and $d\in \mathbb{Z}$ are relatively prime. Then, as mentioned in \cite{fm}, we can consider the $SL(2;\mathbb{Z})$-action (up to shifts) over $D^b(Coh(X^1))$ and each pair $(r,d)\in \mathbb{N}\times \mathbb{Z}\subset \mathbb{Z}^2$ is transformed under this action according to the standard $SL(2;\mathbb{Z})$-action on $\mathbb{Z}^2$. On the other hand, as the counterpart in the symplectic geometry side, we can similarly discuss the $SL(2;\mathbb{Z})$-action on $\check{X}^1$ via the homological mirror symmetry for $(X^1,\check{X}^1)$ (see also section 6 in \cite{slaction}). More precisely, for a given element $g\in SL(2;\mathbb{Z})$, we can define a symplectomorphism 
\begin{equation*}
\varphi_g : \check{X}^1\stackrel{\sim}{\rightarrow} \check{X}^1
\end{equation*}
by 
\begin{equation*}
\varphi_g(\check{x}):=g\check{x},
\end{equation*}
where $\check{x}$ is the local coordinate system of $\check{X}^1\approx (\mathbb{R}/\mathbb{Z})^2$, and note that this $\varphi_g$ is an automorphism on $\check{X}^1$. This $\varphi_g$ further induces autoequivalences over $Fuk(\check{X}^1)$ and $Tr(Fuk(\check{X}^1))$ :
\begin{equation*}
(s_{\frac{d}{r}},\mathcal{L})\mapsto (\varphi_g(s_{\frac{d}{r}}),(\varphi_g^{-1})^*\mathcal{L}).
\end{equation*}
In particular, it is well known that $SL(2;\mathbb{Z})$ is generated by the two elements
\begin{equation*}
\left( \begin{array}{cc} 1 & 0 \\ 1 & 1 \end{array} \right), \ \left( \begin{array}{cc} 0 & 1 \\ -1 & 0 \end{array} \right).
\end{equation*}
More explicitly, the element $\left( \begin{array}{cc} 1 & 0 \\ 1 & 1 \end{array} \right)$ corresponds to the autoequivalence over $D^b(Coh(X^1))$ which is induced from the tensor product $\otimes \mathcal{E}(1,1)$ (of course $\left( \begin{array}{cc} 1 & 0 \\ d & 1 \end{array} \right)$ ($d\in \mathbb{Z}$) corresponds to the autoequivalence over $D^b(Coh(X^1))$ which is induced from the tensor product $\otimes \mathcal{E}(1,d)$), and the element $\left( \begin{array}{cc} 0 & 1 \\ -1 & 0 \end{array} \right)$ corresponds to Fourier-Mukai transform over $D^b(Coh(X^1))$ (the dual elliptic curve of $X^1$ is $X^1$ itself). 

In this paper, as a generalization of the map $\varphi_{g_d} : \check{X}^1\stackrel{\sim}{\rightarrow} \check{X}^1$ which is defined by an element $g_d:=\left( \begin{array}{cc} 1 & 0 \\ d & 1 \end{array} \right)\in SL(2;\mathbb{Z})$ ($d\in \mathbb{Z}$) to the higher dimensional case, in the symplectic geometry side, we consider the diffeomorphism $\varphi_{g_{\tau}}$ on $\check{X}^n$ which is defined by an element
\begin{equation*}
g_{\tau}:=\left( \begin{array}{cc} I_n & O \\ \tau & I_n \end{array} \right)\in SL(2n;\mathbb{Z}),
\end{equation*}
where $\tau\in M(n;\mathbb{Z})$ and $I_n$ is the identity matrix of order $n$. Note that this $\varphi_{g_{\tau}}$ no longer preserves the symplectic structure on $\check{X}^n$ in general, namely, the triangulated functor over $Tr(Fuk(\check{X}^n))$ induced from $\varphi_{g_{\tau}}$ does not necessarily become an autoequivalence over $Tr(Fuk(\check{X}^n))$. Then, we generally need to consider the deformation of $X^n$ by the gerbe $\mathcal{G}_{\tau}$ depending on $\tau\in M(n;\mathbb{Z})$, i.e., the gerby deformed $n$-dimensional complex torus $X_{\mathcal{G}_{\tau}}^n$ as the counterpart in the complex geometry side (this gerbe $\mathcal{G}_{\tau}$ is actually flat). This is the gerby deformation itself which is discussed in this paper.

\subsection{Related works}
One of the most typical examples of mirror pairs is a pair of tori, so there are a lot of studies of the homological mirror symmetry for them. Historically, first, the homological mirror symmetry in the case of elliptic curves was discussed by Polishchuk and Zaslow \cite{elliptic}. After that, Fukaya studied the homological mirror symmetry in the case of abelian varieties as a generalization of the work by Polishchuk and Zaslow to the higher dimensional case \cite{Fuk} (see also \cite{dg, A-inf, abouzaid}). We remark that deformed complex tori (gerby deformed complex tori and noncommutative complex tori) and their mirror partners are not treated in those papers.

In this paper, we treat not only (usual) complex tori and mirror dual complexified symplectic tori but also (certain) gerby deformed complex tori and their mirror partners. In particular, we describe them in the framework of generalized complex geometry \cite{hitchin, Gual}. When we regard a complex manifold as a generalized complex manifold, it is known that a gerby deformation of it is interpreted as a B-field transform of the corresponding generalized complex structure. This fact is explained in \cite{Gual}. Moreover, Oren Ben-Bassat discusses the homological mirror symmetry for generalized complex manifolds under ``adapted requirement'' (see \cite[Definition 4.2]{part1}) in \cite{part1, part2}\footnote{Note that the definition of the mirror transform used in \cite{part1, part2} differs from the definition employed in this paper.}. For example, the generalized complex structure induced from the complex structure over an $n$-dimensional complex torus whose period matrix is given by $\mathbf{i}Y$ ($\mathbf{i}=\sqrt{-1}$, $Y\in M(n;\mathbb{R})$ is positive definite) satisfies ``adapted requirement'', and the generalized complex structure induced from the canonical complex structure over a (general) complex manifold also satisfies ``adapted requirement''. In particular, relatively general B-field transforms of the generalized complex structure induced from the canonical complex structure over a complex manifold (B-field transforms treated in this paper are special cases of them) and the mirror dual counterparts are briefly explained in subsection 5.2 in \cite{part2} (under ``adapted requirement''). 

Examples of previous works focusing on abelian varieties are \cite{kap1, kap2} by Kapustin and Orlov (see also \cite{cal}). In \cite{kap1, kap2}, they study the effect of the B-field on $D^b(Coh(X^n))$ in the case that $X^n$ is an abelian variety (this corresponds to the gerby deformation by a certain gerbe) and properties of the bounded derived category of twisted sheaves over the deformed abelian variety from the viewpoint of superconformal vertex algebras (we do not assume that $X^n$ is an abelian variety and superconformal vertex algebras are not treated in this paper). Furthermore, they also mention the effect of B-fields on Fukaya categories over given symplectic manifolds and the statement of the homological mirror symmetry conjecture which should be discussed for mirror pairs modified by B-fields.

On the other hand, in this paper, we intensively discuss certain gerby deformations of complex tori and the homological mirror symmetry without ``adapted requirement''. Although our discussions are basically based on the works explained in the above, we especially investigate deformations of objects in the respective categories in more detail than the existing works (we study the correspondence between special Lagrangian submanifolds and their mirror dual objects, i.e., deformed Hermitian-Yang-Mills connections in the respective deformed categories).

As examples of other related works, Fourier-Mukai partners of gerby deformed complex tori are studied in \cite{ben, block}, and there, it is verified that they are described as noncommutative complex tori.

\subsection{Main results and the plan of this paper}
Precisely speaking, the gerbe $\mathcal{G}_{\tau}$ mentioned in subsection 1.1 is actually the trivial gerbe with the flat connection which is locally defined by using $\tau\in M(n;\mathbb{Z})$. We first express this gerbe $\mathcal{G}_{\tau}$ in the sense of the Hitchin-Chatterjee's work \cite{chat, h} (see also \cite{han}), and describe the deformation $X_{\mathcal{G}_{\tau}}^n$ of $X^n$ by $\mathcal{G}_{\tau}$ and its mirror partner $\check{X}_{\mathcal{G}_{\tau}}^n$ via generalized complex geometry. Next, we give the deformation of an object $(E(s,\mathcal{L}),\nabla_{(s,\mathcal{L})})$ as the object $(E(s^{\mathcal{G}_{\tau}},\mathcal{L}^{\mathcal{G}_{\tau}}),\nabla_{(s^{\mathcal{G}_{\tau}},\mathcal{L}^{\mathcal{G}_{\tau}})})$ consisting of the twisted holomorphic line bundle $E(s^{\mathcal{G}_{\tau}},\mathcal{L}^{\mathcal{G}_{\tau}})\rightarrow X_{\mathcal{G}_{\tau}}^n$ and the twisted integrable connection $\nabla_{(s^{\mathcal{G}_{\tau}},\mathcal{L}^{\mathcal{G}_{\tau}})}$ on $E(s^{\mathcal{G}_{\tau}},\mathcal{L}^{\mathcal{G}_{\tau}})$ in the complex geometry side. Similarly, we also give the deformation of a pair $(s,\mathcal{L})$ as the pair $(s^{\mathcal{G}_{\tau}},\mathcal{L}^{\mathcal{G}_{\tau}})$ of the Lagrangian section $s^{\mathcal{G}_{\tau}}$ of $\check{X}_{\mathcal{G}_{\tau}}^n\rightarrow B_{\mathcal{G}_{\tau}}$ and the unitary local system $\mathcal{L}^{\mathcal{G}_{\tau}}$ along it in the setting of the SYZ trivial special Lagrangian torus fibration $\check{X}_{\mathcal{G}_{\tau}}^n\rightarrow B_{\mathcal{G}_{\tau}}$ on a base space $B_{\mathcal{G}_{\tau}}$. In particular, we can actually check that these deformed objects $(E(s^{\mathcal{G}_{\tau}},\mathcal{L}^{\mathcal{G}_{\tau}}),\nabla_{(s^{\mathcal{G}_{\tau}},\mathcal{L}^{\mathcal{G}_{\tau}})})$ and $(s^{\mathcal{G}_{\tau}},\mathcal{L}^{\mathcal{G}_{\tau}})$ are mirror dual to each other (Proposition \ref{mainth}). This result implies that our deformations discussed in this paper seem to be natural from the viewpoint of the homological mirror symmetry for $(X_{\mathcal{G}_{\tau}}^n,\check{X}_{\mathcal{G}_{\tau}}^n)$.

On the other hand, the study of special Lagrangian submanifolds is one of the central topics in the symplectic geometry, and it is expected that mirror dual objects corresponding to special Lagrangian submanifolds in the complex geometry side are deformed Hermitian-Yang-Mills connections \cite{leung, m}. Here, we give a rough explanation of deformed Hermitian-Yang-Mills connections. Let $M$ be a K$\ddot{\mathrm{a}}$hler manifold and $L\rightarrow M$ be a holomorphic line bundle with a Hermitian metric $h$. For this holomorphic line bundle $L$, we can consider the Hermitian connection $\nabla_h$ on $L$ and its curvature form $\Omega_h$. In general, if the curvature form $\Omega_h$ satisfies the so-called deformed Hermitian-Yang-Mills equation (see the explanations of B-cycles in section 2 in \cite{leung}), then we call the Hermitian connection $\nabla_h$ a deformed Hermitian-Yang-Mills connection on $L$. In particular, it is conjectured the following : the existence of a Hermitian metric $h$ on $L$ satisfying the deformed Hermitian-Yang-Mills equation is equivalent to that $L$ is stable in the sense of Bridgeland as an object in $D^b(Coh(M))$ \cite{stab, collins}. Thus, deformed Hermitian-Yang-Mills connections and special Lagrangian submanifolds in a mirror partner $\check{M}$ of $M$ are important from the viewpoint of the study of the stability conditions over $D^b(Coh(M))$ and $Tr(Fuk(\check{M}))$. In this paper, we also prove that the twisted integrable connection $\nabla_{(s^{\mathcal{G}_{\tau}},\mathcal{L}^{\mathcal{G}_{\tau}})}$ defines a deformed Hermitian-Yang-Mills connection on the twisted holomorphic line bundle $E(s^{\mathcal{G}_{\tau}},\mathcal{L}^{\mathcal{G}_{\tau}})\rightarrow X_{\mathcal{G}_{\tau}}^n$ if and only if the Lagrangian submanifold $s^{\mathcal{G}_{\tau}}$ (this precisely means that the graph of $s^{\mathcal{G}_{\tau}}$ is a Lagrangian submanifold in $\check{X}_{\mathcal{G}_{\tau}}^n$) becomes a special Lagrangian submanifold in $\check{X}_{\mathcal{G}_{\tau}}^n$, and this result is given in Theorem \ref{dhymsplag}.

This paper is organized as follows. In section 2, we construct a mirror partner $\check{X}^n$ of a given $n$-dimensional complex torus $X^n$ via generalized complex geometry. In section 3, we first define the flat gerbe $\mathcal{G}_{\tau}$, and after that, we consider the deformation $X_{\mathcal{G}_{\tau}}^n$ of $X^n$ by $\mathcal{G}_{\tau}$ and its mirror partner $\check{X}_{\mathcal{G}_{\tau}}^n$. In section 4, we explain the relation between holomorphic line bundles with integrable connections $(E(s,\mathcal{L}),\nabla_{(s,\mathcal{L})})$ and pairs $(s,\mathcal{L})$ of Lagrangian sections $s$ of $\check{X}^n\rightarrow B$ with unitary local systems $\mathcal{L}$ along them. In section 5, we discuss deformations of objects $(E(s,\mathcal{L}),\nabla_{(s,\mathcal{L})})$ and $(s,\mathcal{L})$ over $(X_{\mathcal{G}_{\tau}}^n,\check{X}_{\mathcal{G}_{\tau}}^n)$. In particular, for given mirror dual objects $(E(s,\mathcal{L}),\nabla_{(s,\mathcal{L})})$ and $(s,\mathcal{L})$, we confirm that the deformations of them are also mirror dual to each other. This result is given in Proposition \ref{mainth}. In section 6, for deformed objects $(E(s^{\mathcal{G}_{\tau}},\mathcal{L}^{\mathcal{G}_{\tau}}),\nabla_{(s^{\mathcal{G}_{\tau}},\mathcal{L}^{\mathcal{G}_{\tau}})})$ and $(s^{\mathcal{G}_{\tau}},\mathcal{L}^{\mathcal{G}_{\tau}})$, we prove that $\nabla_{(s^{\mathcal{G}_{\tau}},\mathcal{L}^{\mathcal{G}_{\tau}})}$ defines a deformed Hermitian-Yang-Mills connection on $E(s^{\mathcal{G}_{\tau}},\mathcal{L}^{\mathcal{G}_{\tau}})$ if and only if the Lagrangian submanifold as the graph of $s^{\mathcal{G}_{\tau}}$ becomes a special Lagrangian submanifold in $\check{X}_{\mathcal{G}_{\tau}}^n$. This result is given in Theorem \ref{dhymsplag}.

\section{A mirror pair $(X^n,\check{X}^n)$}
Let $T$ be a complex matrix of order $n\in \mathbb{N}$ such that $\mathrm{Im}T$ is positive definite and $\mathrm{det}T\not=0$\footnote{Although we do not need to assume the condition $\mathrm{det}T\not=0$ when we generally define an $n$-dimensional complex torus $X^n=\mathbb{C}^n/\mathbb{Z}^n\oplus T\mathbb{Z}^n$, in our setting described below, the mirror partner $\check{X}^n$ of $X^n$ does not exist if $\mathrm{det}T=0$. However, we can avoid this problem and discuss the homological mirror symmetry even if $\mathrm{det}T=0$ by modifying the definition of the mirror partner of $X^n$ and a class of objects which we treat. This fact is discussed in \cite{kazushi}.}. For simplicity, we denote the $n$-dimensional complex torus $\mathbb{C}^n/\mathbb{Z}^n\oplus T\mathbb{Z}^n$ by $X^n$ :
\begin{equation*}
X^n:=\mathbb{C}^n/\mathbb{Z}^n\oplus T\mathbb{Z}^n.
\end{equation*}
In this section, we define a mirror partner $\check{X}^n$ of $X^n$ via the framework of generalized complex geometry (\cite{Gual, part1, part2} etc.). Throughout this section, for a smooth manifold $M$ and a smooth vector bundle $E$, $TM$, $T^*M$ and $\Gamma(E)$ denote the tangent bundle on $M$, the cotangent bundle on $M$ and the space of smooth sections of $E$, respectively.

First, let us prepare notations. We sometimes regard $X^n$ as a $2n$-dimensional real torus $\mathbb{R}^{2n}/\mathbb{Z}^{2n}$. We fix an $\epsilon>0$ small enough and let
\begin{align*}
O_{m_1\cdots m_n}^{l_1\cdots l_n}:=\biggl\{ \left( \begin{array}{ccc}x \\ y \end{array} \right)\in X^n \ | \ &\frac{l_j-1}{3}-\epsilon <x_j <\frac{l_j}{3}+\epsilon, \\
&\frac{m_k-1}{3}-\epsilon <y_k <\frac{m_k}{3}+\epsilon, \ j,k=1,\cdots, n \biggr\}
\end{align*}
be subsets in $X^n$, where $l_j$, $m_k=1,2,3$, $x:=(x_1,\cdots, x_n)^t$, $y:=(y_1,\cdots, y_n)^t$, and we identify $x_i\sim x_i+1$, $y_i\sim y_i+1$ for each $i=1,\cdots, n$. If necessary, we denote
\begin{equation*}
O_{m_1\cdots (m_k=m)\cdots m_n}^{l_1\cdots (l_j=l)\cdots l_n}
\end{equation*}
instead of $O_{m_1\cdots m_n}^{l_1\cdots l_n}$ in order to specify the values $l_j=l$, $m_k=m$. Then, $\{ O_{m_1\cdots m_n}^{l_1\cdots l_n} \}_{l_j, m_k=1,2,3}$ is an open cover of $X^n$, and we define the local coordinates of $O_{m_1\cdots m_n}^{l_1\cdots l_n}$ by $(x_1,\cdots, x_n,y_1,\cdots, y_n)^t\in \mathbb{R}^{2n}$. Moreover, we locally express the complex coordinates $z:=(z_1,\cdots, z_n)^t$ of $X^n$ as $z=x+Ty$.

In the above situation, we consider a linear map
\begin{equation*}
\mathcal{I}_J : \Gamma(TX^n\oplus T^*X^n)\rightarrow \Gamma(TX^n\oplus T^*X^n)
\end{equation*}
which is expressed locally as 
\begin{align}
&\mathcal{I}_J \left( \frac{\partial}{\partial x}^t, \frac{\partial}{\partial y}^t, dx^t, dy^t \right) \notag \\
&=\left( \frac{\partial}{\partial x}^t, \frac{\partial}{\partial y}^t, dx^t, dy^t \right) \notag \\
&\hspace{3.5mm} \left( \begin{array}{cccc} -XY^{-1} & -Y-XY^{-1}X & O & O \\ Y^{-1} & Y^{-1}X & O & O \\ O & O & (Y^{-1})^tX^t & -(Y^{-1})^t \\ O & O & Y^t+X^t(Y^{-1})^tX^t & -X^t(Y^{-1})^t \end{array} \right), \label{comp}
\end{align}
where
\begin{equation*}
X:=\mathrm{Re}T, \ Y:=\mathrm{Im}T,
\end{equation*}
and
\begin{align*}
&\frac{\partial}{\partial x}:=\left(\frac{\partial}{\partial x_1},\cdots, \frac{\partial}{\partial x_n}\right)^t, \ \frac{\partial}{\partial y}:=\left(\frac{\partial}{\partial y_1},\cdots, \frac{\partial}{\partial y_n}\right)^t, \\
&dx:=(dx_1,\cdots, dx_n)^t, \ dy:=(dy_1,\cdots, dy_n)^t.
\end{align*}
We can check that this linear map $\mathcal{I}_J$ defines a generalized complex structure over $X^n$.

Let us define a mirror partner $\check{X}^n$ of the $n$-dimensional complex torus $X^n$. In general, for a $2n$-dimensional real torus $T^{2n}$ equipped with a generalized complex structure $\mathcal{I}$ over $T^{2n}$, its mirror dual generalized complex structure $\check{\mathcal{I}}$ over $T^{2n}$ is defined by
\begin{equation}
\check{\mathcal{I}}:=\left( \begin{array}{cccc} I_n & O & O & O \\ O & O & O & I_n \\ O & O & I_n & O \\ O & I_n & O & O \end{array} \right) \mathcal{I} \left( \begin{array}{cccc} I_n & O & O & O \\ O & O & O & I_n \\ O & O & I_n & O \\ O & I_n & O & O \end{array} \right), \label{mirrordef}
\end{equation}
where $I_n$ is the identity matrix of order $n$ (cf. \cite{part1, part2, Kaj, kazushi}). According to the definition (\ref{mirrordef}), we can calculate the mirror dual generalized complex structure $\check{\mathcal{I}}_J$ of $\mathcal{I}_J$ locally as 
\begin{align}
&\check{\mathcal{I}}_J \left( \frac{\partial}{\partial \check{x}}^t, \frac{\partial}{\partial \check{y}}^t, d\check{x}^t, d\check{y}^t \right) \notag \\
&=\left( \frac{\partial}{\partial \check{x}}^t, \frac{\partial}{\partial \check{y}}^t, d\check{x}^t, d\check{y}^t \right) \notag \\
&\hspace{3.5mm} \left( \begin{array}{cccc} -XY^{-1} & O & O & -Y-XY^{-1}X \\ O & -X^t(Y^{-1})^t & Y^t+X^t(Y^{-1})^tX^t & O \\ O & -(Y^{-1})^t & (Y^{-1})^tX^t & O \\ Y^{-1} & O & O & Y^{-1}X \end{array} \right), \label{symp}
\end{align}
where $\check{x}:=(x^1,\cdots, x^n)^t$, $\check{y}:=(y^1,\cdots, y^n)^t$ are the local coordinates of a $2n$-dimensional real torus $\mathbb{R}^{2n}/\mathbb{Z}^{2n}\approx (\mathbb{R}^n/\mathbb{Z}^n)\times (\mathbb{R}^n/\mathbb{Z}^n)$, and 
\begin{align*}
&\frac{\partial}{\partial \check{x}}:=\left(\frac{\partial}{\partial x^1},\cdots, \frac{\partial}{\partial x^n}\right)^t, \ \frac{\partial}{\partial \check{y}}:=\left(\frac{\partial}{\partial y^1},\cdots, \frac{\partial}{\partial y^n}\right)^t, \\
&d\check{x}:=(dx^1,\cdots, dx^n)^t, \ d\check{y}:=(dy^1,\cdots, dy^n)^t.
\end{align*}
We can actually rewrite the representation matrix in (\ref{symp}) to
\begin{align*}
&\left( \begin{array}{cccc} I_n & O & O & O \\ O & I_n & O & O \\ O & -\mathrm{Re}(-(T^{-1})^t) & I_n & O \\ (\mathrm{Re}(-(T^{-1})^t))^t & O & O & I_n \end{array} \right) \\
&\left( \begin{array}{cccc} O & O & O & -((\mathrm{Im}(-(T^{-1})^t))^{-1})^t \\ O & O & (\mathrm{Im}(-(T^{-1})^t))^{-1} & O \\ O & -\mathrm{Im}(-(T^{-1})^t) & O & O \\ (\mathrm{Im}(-(T^{-1})^t))^t & O & O & O \end{array} \right) \\
&\left( \begin{array}{cccc} I_n & O & O & O \\ O & I_n & O & O \\ O & \mathrm{Re}(-(T^{-1})^t) & I_n & O \\ -(\mathrm{Re}(-(T^{-1})^t))^t & O & O & I_n \end{array} \right).
\end{align*}
In particular, the assumption $\mathrm{det}T\not=0$ implies $\mathrm{det}(Y+XY^{-1}X)\not=0$, and
\begin{align*}
&\mathrm{Re}(-(T^{-1})^t)=-((Y+XY^{-1}X)^{-1})^tX^t(Y^{-1})^t, \\ 
&\mathrm{Im}(-(T^{-1})^t)=((Y+XY^{-1}X)^{-1})^t.
\end{align*}
Thus, when we define a mirror partner of the $n$-dimensional complex torus $X^n$ according to the definition (\ref{mirrordef}), it is given by the $2n$-dimensional real torus $\mathbb{R}^{2n}/\mathbb{Z}^{2n}$ with the complexified symplectic form
\begin{equation*}
\tilde{\omega}^{\vee}:=2\pi \mathbf{i}d\check{x}^t (T^{-1})^t d\check{y}.
\end{equation*}
Here, $\mathbf{i}:=\sqrt{-1}$ and when we decompose $\tilde{\omega}^{\vee}$ into 
\begin{equation*}
\tilde{\omega}^{\vee}=2\pi d\check{x}^t \mathrm{Im}(-(T^{-1})^t) d\check{y}-\mathbf{i} \Bigl\{ 2\pi d\check{x}^t \mathrm{Re}(-(T^{-1})^t) d\check{y} \Bigr\},
\end{equation*}
its real part
\begin{equation}
\omega^{\vee}:=2\pi d\check{x}^t \mathrm{Im}(-(T^{-1})^t) d\check{y} \label{sympform}
\end{equation}
gives a symplectic form on $\mathbb{R}^{2n}/\mathbb{Z}^{2n}$, and the closed $2$-form
\begin{equation}
B^{\vee}:=2\pi d\check{x}^t \mathrm{Re}(-(T^{-1})^t) d\check{y} \label{bfield}
\end{equation}
is often called the B-field. Moreover, we define two matrices $\omega_{\rm mat}^{\vee}$ and $B_{\rm mat}^{\vee}$ by
\begin{equation*}
\omega_{\rm mat}^{\vee}:=\mathrm{Im}(-(T^{-1})^t)
\end{equation*}
and
\begin{equation*}
B_{\rm mat}^{\vee}:=\mathrm{Re}(-(T^{-1})^t),
\end{equation*}
respectively, namely,
\begin{equation*}
\omega^{\vee}=2\pi d\check{x}^t \omega_{\rm mat}^{\vee} d\check{y}, \ B^{\vee}=2\pi d\check{x}^t B_{\rm mat}^{\vee} d\check{y}.
\end{equation*}
We sometimes use these notations $\omega_{\rm mat}^{\vee}$ and $B_{\rm mat}^{\vee}$. Hereafter, for simplicity, we denote
\begin{equation*}
\check{X}^n:=\Bigl( \mathbb{R}^{2n}/\mathbb{Z}^{2n}, \ \tilde{\omega}^{\vee}=2\pi \mathbf{i}d\check{x}^t (T^{-1})^t d\check{y} \Bigr).
\end{equation*}

\section{A gerby deformation $X_{\mathcal{G}_{\tau}}^n$ of $X^n$ and its mirror partner $\check{X}_{\mathcal{G}_{\tau}}^n$}
In this section, we consider the deformation $X_{\mathcal{G}_{\tau}}^n$ of $X^n$ by a flat gerbe $\mathcal{G}_{\tau}$ ($\tau \in M(n;\mathbb{Z})$) and define its mirror partner $\check{X}_{\mathcal{G}_{\tau}}^n$. In particular, we also confirm that the deformation from $\check{X}^n$ to $\check{X}_{\mathcal{G}_{\tau}}^n$ is induced from a symplectomorphism $\varphi_{g_{\tau}} : \check{X}^n \stackrel{\sim}{\rightarrow} \check{X}_{\mathcal{G}_{\tau}}^n$ defined by an element $g_{\tau}\in SL(2n;\mathbb{Z})$ as mentioned in Introduction.

Before starting main discussions, we give a remark. Below, we only consider a period matrix $T$ and $\tau\in M(n;\mathbb{Z})$ which satisfy the condition $\tau T\not=(\tau T)^t$ when we discuss the deformation of $X^n=\mathbb{C}^n/\mathbb{Z}^n\oplus T\mathbb{Z}^n$. Frankly speaking, the relation $\tau T=(\tau T)^t$ holds if and only if the corresponding deformation preserves the (generalized) complex structure of $X^n$, so at least, we need not consider a (non-trivial) gerby deformation of $X^n$ in the case $\tau T=(\tau T)^t$. We further remark that the corresponding deformation is interpreted as the autoequivalence over $D^b(Coh(X^n))$ which is induced from the tensor product $\otimes L_{\tau}$ by the holomorphic line bundle $L_{\tau}\rightarrow X^n$ depending on $\tau \in M(n;\mathbb{Z})$ (the condition $\tau T=(\tau T)^t$ implies the holomorphicity of $L_{\tau}$) under the assumption $\tau T=(\tau T)^t$. Such a fact is mentioned in \cite{Gual} for instance. Therefore, we focus on the case $\tau T\not=(\tau T)^t$ only below.

We first prepare notations. For each open set $O_{m_1\cdots m_n}^{l_1\cdots l_n}$ in $X^n$, we put
\begin{equation*}
l:=(l_1\cdots l_n), \ m:=(m_1\cdots m_n),
\end{equation*}
and denote $O_{m_1\cdots m_n}^{l_1\cdots l_n}$ by $O_m^l$ for simplicity :
\begin{equation*}
O_m^l:=O_{m_1\cdots m_n}^{l_1\cdots l_n}.
\end{equation*}
Namely, when we set
\begin{equation*}
I:=\Bigl\{ (l;m)=(l_1\cdots l_n ; m_1\cdots m_n) \ | \ l_1,\cdots, l_n, m_1,\cdots, m_n=1,2,3 \Bigr\},
\end{equation*}
$\{ O_m^l \}_{(l;m)\in I}$ gives an open cover of $X^n$ :
\begin{equation*}
X^n=\bigcup_{(l;m)\in I}O_m^l.
\end{equation*}
Moreover, we denote
\begin{equation*}
O_{mm'}^{ll'}:=O_m^l \cap O_{m'}^{l'}, \ O_{mm'm''}^{ll'l''}:=O_m^l \cap O_{m'}^{l'} \cap O_{m''}^{l''}
\end{equation*}
for $(l;m)$, $(l';m')$, $(l'';m'')\in I$, and define a smooth complex line bundle $L_{mm'm''}^{ll'l''}\rightarrow O_{mm'm''}^{ll'l''}$ by
\begin{equation*}
L_{mm'm''}^{ll'l''}:=L_{mm'}^{ll'}|_{O_{mm'm''}^{ll'l''}}\otimes L_{m'm''}^{l'l''}|_{O_{mm'm''}^{ll'l''}}\otimes L_{m''m}^{l''l}|_{O_{mm'm''}^{ll'l''}}
\end{equation*}
for given three smooth complex line bundles $L_{mm'}^{ll'}\rightarrow O_{mm'}^{ll'}$, $L_{m'm''}^{l'l''}\rightarrow O_{m'm''}^{l'l''}$, $L_{m''m}^{l''l}\rightarrow O_{m''m}^{l''l}$.

Here, we recall the definition of gerbes\footnote{Gerbes were originally introduced by Giraud in \cite{giraud}.} in the sense of Hitchin-Chatterjee's work \cite{chat, h} briefly (see also \cite{han}). Roughly speaking, for a given smooth manifold $M$, a gerbe on $M$ is determined by an open cover $\{ U_i \}_{i\in I}$ of $M$ and an element of $H^2(M;\mathcal{A}^*)$, where $\mathcal{A}^*$ is the sheaf of nowhere vanishing smooth functions on $M$, so we need to focus on each intersection $U_{ijk}$ of three open sets $U_i$, $U_j$, $U_k$ if we consider gerbes according to the original definition (we use the notations $U_{ij}:=U_i\cap U_j$, $U_{ijk}:=U_i\cap U_j\cap U_k$, etc. in the above sense). On the other hand, Hitchin and Chatterjee propose a way of defining gerbes as families of line bundles defined on intersections $U_{ij}$ of two open sets. Precisely speaking, this definition is given as follows.
\begin{definition} \label{gerbe}
A gerbe $\mathcal{G}(I,L,\theta)$ on a smooth manifold $M$ is defined by the following data $:$ \\
$(\mathrm{i})$ An open cover $\{ U_i \}_{i\in I}$ of $M$. \\
$(\mathrm{ii})$ A family $L:=\{ L_{ij} \}_{i,j\in I}$ of smooth complex line bundles $L_{ij}\rightarrow U_{ij}$ such that $L_{ji}\cong L_{ij}^*$ on each $U_{ij}$ and $L_{ijk}:=L_{ij}|_{U_{ijk}}\otimes L_{jk}|_{U_{ijk}}\otimes L_{ki}|_{U_{ijk}}$ is isomorphic to $\mathcal{O}_{ijk}$ on each $U_{ijk}$, where $L_{ij}^*$ is the dual of $L_{ij}$ and $\mathcal{O}_{ijk}$ is the trivial complex line bundle on $U_{ijk}$. \\
$(\mathrm{iii})$ A family $\theta:=\{ \theta_{ijk} \}_{i,j,k\in I}$ of smooth sections $\theta_{ijk}\in \Gamma(U_{ijk};L_{ijk})$ such that $\theta_{ijk}=\theta_{jik}^{-1}=\theta_{ikj}^{-1}=\theta_{kji}^{-1}$ on each $U_{ijk}$ and 
\begin{equation*}
(\delta \theta)_{ijkl}=\theta_{jkl}\otimes \theta_{ikl}^{-1}\otimes \theta_{ijl}\otimes \theta_{ijk}^{-1}=1 
\end{equation*}
holds on each $U_{ijkl}$, where $\Gamma(U_{ijk};L_{ijk})$ denotes the space of smooth sections of $L_{ijk}$ on $U_{ijk}$ and $\delta$ is the derivative in the sense of $\check{C}$ech cohomology.
\end{definition}
\begin{rem}
In the condition $(\mathrm{ii})$ in Definition \ref{gerbe}, a trivialization $L_{ijk}\cong \mathcal{O}_{ijk}$ is not being fixed. Moreover, we see that
\begin{align*}
&(\delta^2 L)_{ijkl} \\
&=(L_{jk}\otimes L_{kl}\otimes L_{lj})\otimes(L_{ik}\otimes L_{kl}\otimes L_{li})^{-1}\otimes (L_{ij}\otimes L_{jl}\otimes L_{li})\otimes (L_{ij}\otimes L_{jk}\otimes L_{ki})^{-1} \\ &=\mathcal{O}_{ijkl}
\end{align*}
holds on each $U_{ijkl}$ since each smooth complex line bundle $L_{ij}$ satisfies the condition $L_{ji}\cong L_{ij}^*=L_{ij}^{-1}$, where $\mathcal{O}_{ijkl}$ is the trivial complex line bundle on $U_{ijkl}$. Hence, the condition $(\mathrm{iii})$ in Definition \ref{gerbe} states that $(\delta \theta)_{ijkl}$ is the canonical trivialization $1$ of $(\delta^2 L)_{ijkl}=\mathcal{O}_{ijkl}$.
\end{rem}
A connection over a gerbe $\mathcal{G}(I,L,\theta)$ is defined as follows.
\begin{definition} \label{0-conn}
Let $\mathcal{G}(I,L,\theta)$ be a gerbe on a smooth manifold $M$. We call a family $\nabla:=\{ \nabla_{ij} \}_{i,j\in I}$ of connections $\nabla_{ij}$ on $L_{ij}\rightarrow U_{ij}$ such that
\begin{equation*}
\nabla_{ijk}\theta_{ijk}=0
\end{equation*}
on each $U_{ijk}$ a 0-connection over $\mathcal{G}(I,L,\theta)$. Here, $\nabla_{ijk}$ denotes the connection on $L_{ijk}$ which is induced from the connections $\nabla_{ij}$, $\nabla_{jk}$, $\nabla_{ki}$ $:$
\begin{equation*}
\nabla_{ijk}:=\nabla_{ij}+\nabla_{jk}+\nabla_{ki}.
\end{equation*}
\end{definition}
In particular, note that each $\nabla_{ijk}$ in Definition \ref{0-conn} is a flat connection since $\nabla_{ijk}$ is a connection on $L_{ijk}$ which is isomorphic to $\mathcal{O}_{ijk}$. Therefore, the curvature form $\Omega_{ij}$ of each connection $\nabla_{ij}$ satisfies the relation 
\begin{equation*}
\Omega_{ij}+\Omega_{jk}+\Omega_{ki}=(\delta \Omega)_{ijk}=0,
\end{equation*}
so $\{ \Omega_{ij} \}_{i,j\in I}$ defines a representative in $H^1(M;\mathcal{A}^2)$, where $\mathcal{A}^2$ is the sheaf of smooth 2-forms on $M$. This implies that the existence of $\beta_i\in \Gamma(U_i;\mathcal{A}^2)$ such that
\begin{equation*}
(\delta \beta)_{ij}=\Omega_{ij}
\end{equation*}
for each $\Omega_{ij}$ since $\mathcal{A}^2$ is a fine sheaf, i.e., $H^1(M;\mathcal{A}^2)\cong 0$.
\begin{definition} \label{1-conn}
We call the family $\beta:=\{ \beta_i \}_{i\in I}$ of 2-forms $\beta_i$ in the above a 1-connection over $\mathcal{G}(I,L,\theta)$ compatible with the 0-connection $\nabla$.
\end{definition}
On the other hand, the definition of a ``holomorphic'' gerbe on a complex manifold is also given by replacing ``smooth complex line bundles $L_{ij}\rightarrow U_{ij}$'' and ``smooth sections $\theta_{ijk}$'' with ``holomorphic line bundles $L_{ij}\rightarrow U_{ij}$'' and ``holomorphic sections $\theta_{ijk}$'', respectively in Definition \ref{gerbe}. In particular, we can naturally rewrite the notions of 0 and 1-connections in Definition \ref{0-conn} and Definition \ref{1-conn} to the ``holomorphic'' version (see section 5 in \cite{chat}).

Now, let us consider the trivial gerbe $\mathcal{G}(I,\mathcal{O},\theta)$ on $X^n$ :
\begin{align*}
&\mathcal{O}:=\Bigl\{ \mathcal{O}_{mm'}^{ll'}\rightarrow O_{mm'}^{ll'} \Bigr\}_{(l;m),(l';m')\in I}, \\ &\theta:=\Bigl\{ \theta_{mm'm''}^{ll'l''}=1\in \Gamma(O_{mm'm''}^{ll'l''};\mathcal{O}_{mm'm''}^{ll'l''}) \Bigr\}_{(l;m),(l';m'),(l'';m'')\in I},
\end{align*}
and by using a matrix $\tau \in M(n;\mathbb{Z})$, we define a flat connection over $\mathcal{G}(I,\mathcal{O},\theta)$ as follows. For each $(l;m)$, $(l';m')\in I$, we take a 1-form $\omega_{mm'}^{ll'}$ on the open set $O_{mm'}^{ll'}$ which is expressed locally as
\begin{equation*}
\omega_{mm'}^{ll'}:=
\begin{cases}
2\pi (\tau_{1j}dy_1+\cdots \tau_{nj}dy_n) & (l_j=1, l_j'=3, l_k=l_k' (k\not=j), m=m') \\ 0 & (\mathrm{otherwise}),
\end{cases}
\end{equation*}
where $j=1,\cdots, n$. By using them, we locally define a flat connection $\nabla_{mm'}^{ll'}$ on the trivial line bundle $\mathcal{O}_{mm'}^{ll'}\rightarrow O_{mm'}^{ll'}$ by
\begin{equation*}
\nabla_{mm'}^{ll'}:=d-\mathbf{i}\omega_{mm'}^{ll'}
\end{equation*}
for each $(l;m)$, $(l';m')\in I$, where $d$ denotes the exterior derivative. Then, the family of $\nabla_{mm'}^{ll'}$ gives a 0-connection
\begin{equation*}
\nabla:=\Bigl\{ \nabla_{mm'}^{ll'} \Bigr\}_{(l;m),(l';m')\in I}
\end{equation*}
over $\mathcal{G}(I,\mathcal{O},\theta)$. Moreover, for each $(l;m)\in I$, we consider a 2-form $B_m^l$ on the open set $O_m^l$ which is expressed locally as
\begin{equation*}
B_m^l:=2\pi dx^t \tau ^t dy.
\end{equation*}
Note that this $B_m^l$ is actually defined globally. We can easily check that the family of $-\mathbf{i}B_m^l$ defines a 1-connection
\begin{equation}
B:=\Bigl\{ -\mathbf{i}B_m^l \Bigr\}_{(l;m)\in I} \label{B}
\end{equation}
over $\mathcal{G}(I,\mathcal{O},\theta)$ compatible with the 0-connection $\nabla$. We denote the trivial gerbe $\mathcal{G}(I,\mathcal{O},\theta)$ with the flat connection $(\nabla, B)$ by $\mathcal{G}_{\tau }$ :
\begin{equation*}
\mathcal{G}_{\tau }:=\Bigl( \mathcal{G}(I,\mathcal{O},\theta), \ \nabla, \ B \Bigr),
\end{equation*}
and we define the gerby deformed $n$-dimensional complex torus $X_{\mathcal{G}_{\tau}}^n$ by the pair of the $n$-dimensional complex torus $X^n$ and the flat gerbe $\mathcal{G}_{\tau}$ : 
\begin{equation*}
X_{\mathcal{G}_{\tau }}^n:=\Bigl( X^n, \ \mathcal{G}_{\tau} \Bigr).
\end{equation*}

We explain the interpretation for the deformation $X_{\mathcal{G}_{\tau}}^n$ of $X^n$ by the flat gerbe $\mathcal{G}_{\tau}$ from the viewpoint of generalized complex geometry. Similarly as in the case of the symplectic geometry side which is explained in section 2, we can consider a B-field transform of the generalized complex structure $\mathcal{I}_J$ over $X^n$. Actually, in our setting, the 2-form
\begin{equation*}
B_m^l=2\pi dx^t \tau^t dy \ \ \ \ \ ((l;m)\in I)
\end{equation*}
is a B-field itself in this sense, and the corresponding B-field transform $\mathcal{I}_J(B)$ of $\mathcal{I}_J$ (see the local expression (\ref{comp})) is expressed locally as
\begin{align*}
&\mathcal{I}_J(B) \left( \frac{\partial}{\partial x}^t, \frac{\partial}{\partial y}^t, dx^t, dy^t \right) \\
&=\left( \frac{\partial}{\partial x}^t, \frac{\partial}{\partial y}^t, dx^t, dy^t \right) \left( \begin{array}{cccc} I_n & O & O & O \\ O & I_n & O & O \\ O & -\tau ^t & I_n & O \\ \tau & O & O & I_n \end{array} \right) \mathcal{I}_J \left( \begin{array}{cccc} I_n & O & O & O \\ O & I_n & O & O \\ O & \tau ^t & I_n & O \\ -\tau & O & O & I_n \end{array} \right).
\end{align*}
In particular, we see that this B-field transform preserves the complex structure of $X^n$, i.e., $\mathcal{I}_J(B)=\mathcal{I}_J$, if and only if the relation $\tau T=(\tau T)^t$ holds. However, as mentioned in the beginning of this section, we assume the condition $\tau T\not=(\tau T)^t$ when we discuss the deformation of $X^n$, and in fact, the above $\mathcal{I}_J(B)$ with the condition $\tau T\not=(\tau T)^t$ defines a generalized complex structure over $X_{\mathcal{G}_{\tau}}^n$ (see also \cite{Gual}).

Let us calculate the mirror dual generalized complex structure $\check{\mathcal{I}}_J(B)$ of $\mathcal{I}_J(B)$ according to the definition (\ref{mirrordef}). By direct calculations, we see
\begin{align*}
&\check{\mathcal{I}}_J(B) \left( \frac{\partial}{\partial \check{x}}^t, \frac{\partial}{\partial \check{y}}^t, d\check{x}^t, d\check{y}^t \right) \\
&=\left( \frac{\partial}{\partial \check{x}}^t, \frac{\partial}{\partial \check{y}}^t, d\check{x}^t, d\check{y}^t \right) \\
&\hspace{3.5mm} \left( \begin{array}{cccc} I_n & O & O & O \\ O & I_n & O & O \\ B_{\rm mat}^{\vee}\tau -\tau ^t(B_{\rm mat}^{\vee})^t & -B_{\rm mat}^{\vee} & I_n & O \\ (B_{\rm mat}^{\vee})^t & O & O & I_n \end{array} \right) \\
&\hspace{3.5mm} \left( \begin{array}{cccc} O & O & O & -((\omega_{\rm mat}^{\vee})^{-1})^t \\ O & O & (\omega_{\rm mat}^{\vee})^{-1} & (\omega_{\rm mat}^{\vee})^{-1}\tau ^t-\tau ((\omega_{\rm mat}^{\vee})^{-1})^t \\ \omega_{\rm mat}^{\vee}\tau -\tau ^t(\omega_{\rm mat}^{\vee})^t & -\omega_{\rm mat}^{\vee} & O & O \\ (\omega_{\rm mat}^{\vee})^t & O & O & O \end{array} \right) \\
&\hspace{3.5mm} \left( \begin{array}{cccc} I_n & O & O & O \\ O & I_n & O & O \\ -B_{\rm mat}^{\vee}\tau +\tau ^t(B_{\rm mat}^{\vee})^t & B_{\rm mat}^{\vee} & I_n & O \\ -(B_{\rm mat}^{\vee})^t & O & O & I_n \end{array} \right),
\end{align*}
and this fact implies that the mirror partner of $X_{\mathcal{G}_{\tau }}^n$ is given by the $2n$-dimensional real torus $\mathbb{R}^{2n}/\mathbb{Z}^{2n}$ with the complexified symplectic form
\begin{equation*}
\tilde{\omega}_{\tau }^{\vee}:=2\pi \mathbf{i}d\check{x}^t (T^{-1})^t d\check{y}+2\pi \mathbf{i}d\check{x}^t (-(T^{-1})^t\tau )d\check{x}.
\end{equation*}
Here, similarly as in the case of $\check{X}^n$, when we decompose $\tilde{\omega}_{\tau }^{\vee}$ into
\begin{equation*}
\tilde{\omega}_{\tau }^{\vee}=2\pi d\check{x}^t \omega_{\rm mat}^{\vee} d\check{y}-2\pi d\check{x}^t \omega_{\rm mat}^{\vee}\tau d\check{x}-\mathbf{i} \Bigl( 2\pi d\check{x}^t B_{\rm mat}^{\vee} d\check{y}-2\pi d\check{x}^t B_{\rm mat}^{\vee}\tau d\check{x} \Bigr),
\end{equation*}
its real part
\begin{equation}
\omega_{\tau }^{\vee}:=2\pi d\check{x}^t \omega_{\rm mat}^{\vee} d\check{y}-2\pi d\check{x}^t \omega_{\rm mat}^{\vee}\tau d\check{x} \label{sympformtau}
\end{equation}
gives a symplectic form on $\mathbb{R}^{2n}/\mathbb{Z}^{2n}$, and the closed 2-form
\begin{equation}
B_{\tau }^{\vee}:=2\pi d\check{x}^t B_{\rm mat}^{\vee} d\check{y}-2\pi d\check{x}^t B_{\rm mat}^{\vee}\tau d\check{x} \label{bfieldtau}
\end{equation}
is the B-field. In particular, associated to the deformation of $X^n$ by $\mathcal{G}_{\tau }$, in the symplectic geometry side, the symplectic form $\omega^{\vee}$ on $\check{X}^n$ and the B-field $B^{\vee}$ on $\check{X}^n$ are twisted by $-2\pi d\check{x}^t \omega_{\rm mat}^{\vee}\tau d\check{x}$ and $-2\pi d\check{x}^t B_{\rm mat}^{\vee}\tau d\check{x}$, respectively (compare the definitions (\ref{sympform}) and (\ref{bfield}) with the definitions (\ref{sympformtau}) and (\ref{bfieldtau}), respectively). Hereafter, we denote this complexified symplectic torus (the mirror partner of $X_{\mathcal{G}_{\tau }}^n$ which is obtained by the definition (\ref{mirrordef})) by
\begin{equation*}
\check{X}_{\mathcal{G}_{\tau }}^n:=\Bigl( \mathbb{R}^{2n}/\mathbb{Z}^{2n}, \ \tilde{\omega}_{\tau }^{\vee}=2\pi \mathbf{i}d\check{x}^t (T^{-1})^t d\check{y}+2\pi \mathbf{i}d\check{x}^t (-(T^{-1})^t\tau )d\check{x} \Bigr).
\end{equation*}

The above deformation from $\check{X}^n$ to $\check{X}_{\mathcal{G}_{\tau }}^n$ can also be understood as follows. By using
\begin{equation*}
g_{\tau }:=\left( \begin{array}{cc} I_n & O \\ \tau & I_n \end{array} \right)\in SL(2n;\mathbb{Z}),
\end{equation*}
let us define a diffeomorphism 
\begin{equation*}
\varphi_{g_{\tau }} : \check{X}^n \stackrel{\sim}{\rightarrow} \check{X}_{\mathcal{G}_{\tau }}^n
\end{equation*}
by
\begin{equation*}
\varphi_{g_{\tau }}\left( \begin{array}{cc} \check{x} \\ \check{y} \end{array} \right):=g_{\tau } \left( \begin{array}{cc} \check{x} \\ \check{y} \end{array} \right).
\end{equation*}
We can easily check that this diffeomorphism $\varphi_{g_{\tau }}$ is a (complexified) symplectomorphism, namely,
\begin{equation*}
(\varphi_{g_{\tau }})^* \tilde{\omega}_{\tau }^{\vee}=\tilde{\omega}^{\vee}
\end{equation*}
holds. Thus, in other words, we can also interpret that the deformation $X_{\mathcal{G}_{\tau }}^n$ of $X^n$ by the flat gerbe $\mathcal{G}_{\tau }$ in the complex geometry side is induced from the (complexified) symplectomorphism $\varphi_{g_{\tau }} : \check{X}^n \stackrel{\sim}{\rightarrow} \check{X}_{\mathcal{G}_{\tau }}^n$ in the symplectic geometry side (via the homological mirror symmetry). In particular, in the case $n=1$, i.e., the case of elliptic curves, although there are no non-trivial gerby deformations of $X^1$, this fact corresponds to that $\varphi_{g_{\tau }}$ gives the $SL(2;\mathbb{Z})$-action over $\check{X}^1$ for a given arbitrary $\tau \in M(1;\mathbb{Z})=\mathbb{Z}$ (see also section 6 in \cite{slaction}).

\section{Deformations of objects}
In general, the mirror pair $(X^n,\check{X}^n)$ can be regarded as the trivial special Lagrangian torus fibrations $X^n\rightarrow \mathbb{R}^n/\mathbb{Z}^n$ and $\check{X}^n\rightarrow \mathbb{R}^n/\mathbb{Z}^n$ by SYZ construction \cite{SYZ}. Then, we can construct a holomorphic line bundle on $X^n$ from each pair of a Lagrangian section of $\check{X}^n\rightarrow \mathbb{R}^n/\mathbb{Z}^n$ and a unitary local system along it, and this transformation is called SYZ transform (see \cite{leung, A-P}). The purpose of this section is to discuss deformations of such objects over the deformed mirror pair $(X_{\mathcal{G}_{\tau}}^n,\check{X}_{\mathcal{G}_{\tau}}^n)$.

\subsection{Holomorphic line bundles and Lagrangian submanifolds with unitary local systems}
In this subsection, as preparations of main discussions, we recall the relation between Lagrangian submanifolds in $\check{X}^n$ with unitary local systems along them and the corresponding holomorphic line bundles on $X^n$ based on SYZ construction (SYZ transform).

First, we explain the complex geometry side, namely, define a class of holomorphic line bundles $E_{(s,a,q)}\rightarrow X^n$ with integrable connections $\nabla_{(s,a,q)}$, and for simplicity, we sometimes denote a holomorphic line bundle $E_{(s,a,q)}\rightarrow X^n$ with an integrable connection $\nabla_{(s,a,q)}$ by $(E_{(s,a,q)},\nabla_{(s,a,q)})$. In fact, we first construct it as a smooth complex line bundle on $X^n$ with a connection, and discuss the holomorphicity of such a smooth complex line bundle later (Proposition \ref{hol}). Before giving the strict definition of $(E_{(s,a,q)},\nabla_{(s,a,q)})$, we mention the idea of the construction of $E_{(s,a,q)}$ since the notations of transition functions of it are very complicated. Although we will give the details of the symplectic geometry side again later, in general, the Lagrangian submanifold in $\check{X}^n$ corresponding to a holomorphic line bundle $E_{(s,a,q)}\rightarrow X^n$ with an integrable connection $\nabla_{(s,a,q)}$ is expressed locally as
\begin{equation*}
\left\{ \left( \begin{array}{cc} \check{x} \\ s(\check{x}) \end{array} \right)\in \check{X}^n \approx (\mathbb{R}^n/\mathbb{Z}^n)\times (\mathbb{R}^n/\mathbb{Z}^n) \right\},
\end{equation*}
where
\begin{equation*}
s(\check{x}):=\Bigl( s^1(\check{x}),\cdots, s^n(\check{x}) \Bigr)^t
\end{equation*}
and locally defined smooth functions $s^1(\check{x}),\cdots ,s^n(\check{x})$ satisfy the relations
\begin{equation*}
\begin{array}{ccc} s^1(x^1+1,x^2,\cdots ,x^n)=s^1(\check{x})+a_{11}, & \cdots & s^1(x^1,\cdots ,x^{n-1},x^n+1)=s^1(\check{x})+a_{1n}, \\
\vdots & & \vdots \\
s^n(x^1+1,x^2,\cdots ,x^n)=s^n(\check{x})+a_{n1}, & \cdots & s^n(x^1,\cdots ,x^{n-1},x^n+1)=s^n(\check{x})+a_{nn} \end{array}
\end{equation*}
for a matrix
\begin{equation*}
a:=\left( \begin{array}{ccc} a_{11} & \cdots & a_{1n} \\ \vdots & \ddots & \vdots \\ a_{n1} & \cdots & a_{nn} \end{array} \right)\in M(n;\mathbb{Z}).
\end{equation*}
Then, the transition functions of $E_{(s,a,q)}$ are determined by $a\in M(n;\mathbb{Z})$. Below, we give the strict definition of $E_{(s,a,q)}$. Let
\begin{equation*}
\psi ^{l_1 \cdots l_n}_{m_1 \cdots m_n} : O^{l_1 \cdots l_n}_{m_1 \cdots m_n}\rightarrow O^{l_1 \cdots l_n}_{m_1 \cdots m_n} \times \mathbb{C} \hspace{5mm} (l_j, m_k =1,2,3)
\end{equation*}
be a smooth section of $E_{(s,a,q)}|_{O^{l_1 \cdots l_n}_{m_1 \cdots m_n}}$. The transition functions of $E_{(s,a,q)}$ are non-trivial on 
\begin{equation*}
O^{(l_1=3) \cdots l_n}_{m_1 \cdots m_n}\cap O^{(l_1=1) \cdots l_n}_{m_1 \cdots m_n}, \ O^{l_1 (l_2=3) \cdots l_n}_{m_1 \cdots m_n}\cap O^{l_1 (l_2=1) \cdots l_n}_{m_1 \cdots m_n},\cdots, O^{l_1 \cdots (l_n=3)}_{m_1 \cdots m_n}\cap O^{l_1 \cdots (l_n=1)}_{m_1 \cdots m_n},
\end{equation*}
and otherwise are trivial. We define the transition function on $O^{l_1 \cdots (l_j =3) \cdots l_n}_{m_1 \cdots m_n}\cap O^{l_1 \cdots (l_j =1) \cdots l_n}_{m_1 \cdots m_n}$ by
\begin{align*}
&\left.\psi ^{l_1 \cdots (l_j =3) \cdots l_n}_{m_1 \cdots m_n} \right|_{O^{l_1 \cdots (l_j =3) \cdots l_n}_{m_1 \cdots m_n}\cap O^{l_1 \cdots (l_j =1) \cdots l_n}_{m_1 \cdots m_n}}\\
&=\Bigl( e^{2\pi \mathbf{i}a_j^t y} \Bigr) \left.\psi ^{l_1 \cdots (l_j =1) \cdots l_n}_{m_1 \cdots m_n} \right|_{O^{l_1 \cdots (l_j =3) \cdots l_n}_{m_1 \cdots m_n}\cap O^{l_1 \cdots (l_j =1) \cdots l_n}_{m_1 \cdots m_n}}
\end{align*}
for each $j=1,\cdots, n$, where $a_j:=(a_{1j},\cdots, a_{nj})^t\in \mathbb{Z}^n$. In particular, when we regard the transition function on $O^{l_1 \cdots (l_j =3) \cdots l_n}_{m_1 \cdots (m_k =3) \cdots m_n}\cap O^{l_1 \cdots (l_j =1) \cdots l_n}_{m_1 \cdots (m_k=1) \cdots m_n}$ as
\begin{align*}
&\left.\psi ^{l_1 \cdots (l_j =3) \cdots l_n}_{m_1 \cdots (m_k=3) \cdots m_n} \right|_{O^{l_1 \cdots (l_j =3) \cdots l_n}_{m_1 \cdots (m_k =3) \cdots m_n}\cap O^{l_1 \cdots (l_j =1) \cdots l_n}_{m_1 \cdots (m_k=1) \cdots m_n}}\\
&=\left.\psi ^{l_1 \cdots (l_j =3) \cdots l_n}_{m_1 \cdots (m_k=1) \cdots m_n} \right|_{O^{l_1 \cdots (l_j =3) \cdots l_n}_{m_1 \cdots (m_k =3) \cdots m_n}\cap O^{l_1 \cdots (l_j =1) \cdots l_n}_{m_1 \cdots (m_k=1) \cdots m_n}}\\
&=\Bigl( e^{2\pi \mathbf{i}a_j^t y} \Bigr) \left.\psi ^{l_1 \cdots (l_j =1) \cdots l_n}_{m_1 \cdots (m_k=1) \cdots m_n} \right|_{O^{l_1 \cdots (l_j =3) \cdots l_n}_{m_1 \cdots (m_k =3) \cdots m_n}\cap O^{l_1 \cdots (l_j =1) \cdots l_n}_{m_1 \cdots (m_k=1) \cdots m_n}}
\end{align*}
for each pair $(j,k)\in \{1,\cdots, n\}\times \{1,\cdots, n\}$, it is clear that the cocycle condition is satisfied. Moreover, by using a locally defined smooth function
\begin{equation*}
s(x):=\Bigl( s^1(x),\cdots, s^n(x) \Bigr)^t
\end{equation*}
satisfying the relations
\begin{equation}
s(x_1+1,x_2,\cdots, x_n)=s(x)+a_1, \ \cdots, \ s(x_1,\cdots ,x_{n-1},x_n+1)=s(x)+a_n \label{section}
\end{equation}
and a constant vector
\begin{equation*}
q:=(q_1,\cdots, q_n)^t\in \mathbb{R}^n,
\end{equation*}
we locally define a connection $\nabla_{(s,a,q)}$ on $E_{(s,a,q)}$ by
\begin{equation}
\nabla_{(s,a,q)}=d+\omega_{(s,a,q)}, \ \ \ \omega_{(s,a,q)}:=-2\pi \mathbf{i} \Bigl( s(x)^t+q^t T \Bigr)dy. \label{connection}
\end{equation}
Also, for the constant vectors $a_j\in \mathbb{Z}^n$ ($j=1,\cdots,n$), we set
\begin{equation*}
a:=(a_1,\cdots, a_n)\in M(n;\mathbb{Z}).
\end{equation*}
We can easily check that the above $\nabla_{(s,a,q)}$ is compatible with the transition functions of $E_{(s,a,q)}$ since the function $s$ satisfies the relations (\ref{section}). Now, we give the following proposition without its proof since it can be proved similarly as in \cite[Proposition 3.1]{bijection} (see also \cite[Proposition 4.3]{kazushi} and \cite[Proposition 2.1]{exact}), where
\begin{equation*}
\frac{\partial s}{\partial x}(x):=\left( \begin{array}{ccc} \frac{\partial s^1}{\partial x_1}(x) & \cdots & \frac{\partial s^1}{\partial x_n}(x) \\ \vdots & \ddots & \vdots \\ \frac{\partial s^n}{\partial x_1}(x) & \cdots & \frac{\partial s^n}{\partial x_n}(x) \end{array} \right).
\end{equation*}
\begin{proposition} \label{hol}
For a given locally defined smooth function $s$ satisfying the relations $(\ref{section})$ and a given constant vector $q\in \mathbb{R}^n$, the connection $\nabla_{(s,a,q)}$ is an integrable connection on $E_{(s,a,q)}\rightarrow X^n$ if and only if 
\begin{equation*}
\frac{\partial s}{\partial x}(x)T=\left( \frac{\partial s}{\partial x}(x)T \right)^t
\end{equation*}
holds.
\end{proposition}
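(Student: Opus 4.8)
The plan is to characterize integrability of $\nabla_{(s,a,q)}$ through the type of its curvature and then translate the resulting condition into the stated symmetry. Recall that a connection on a smooth complex line bundle over a complex manifold induces, via its $(0,1)$-part $\nabla_{(s,a,q)}^{0,1}$, a $\bar\partial$-operator that equips the bundle with a holomorphic structure exactly when $\bigl(\nabla_{(s,a,q)}^{0,1}\bigr)^2=0$; for a line bundle this is precisely the vanishing of the $(0,2)$-component of the curvature, and once it holds $\nabla_{(s,a,q)}$ is automatically compatible with that holomorphic structure. So ``integrable connection'' here means $\Omega_{(s,a,q)}^{0,2}=0$, where $\Omega_{(s,a,q)}$ is the curvature. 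Since $E_{(s,a,q)}$ has rank one, the curvature is simply $\Omega_{(s,a,q)}=d\omega_{(s,a,q)}$, with no quadratic term, and the whole problem reduces to computing $d\omega_{(s,a,q)}$ and isolating its $(0,2)$-part.

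First I would compute the curvature directly from the local expression (\ref{connection}). Since $s$ depends only on $x$ and the row vector $q^tT$ is constant, the summand $q^tT$ contributes nothing, and
\[
\Omega_{(s,a,q)}=d\omega_{(s,a,q)}=-2\pi\mathbf{i}\,dx^t\Bigl(\tfrac{\partial s}{\partial x}\Bigr)^t\wedge dy=-2\pi\mathbf{i}\sum_{j,k}\tfrac{\partial s^k}{\partial x_j}\,dx_j\wedge dy_k .
\]
In particular $q$ disappears, matching the fact that the stated criterion does not involve $q$; one also notes that this $2$-form is globally well defined on $X^n$ because $\tfrac{\partial s}{\partial x}$ is unchanged under the jumps $s\mapsto s+a_j$ imposed by (\ref{section}) across the charts.

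Next I would pass to the complex coordinates $z=x+Ty$. Writing $\bar z=x+\bar T y$ and using $T-\bar T=2\mathbf{i}Y$ with $Y=\mathrm{Im}\,T$, I would solve for the antiholomorphic parts of the real differentials, obtaining $(dy)^{0,1}=\tfrac{\mathbf{i}}{2}Y^{-1}\,d\bar z$ and $(dx)^{0,1}=-\tfrac{\mathbf{i}}{2}TY^{-1}\,d\bar z$. Substituting these into $\Omega_{(s,a,q)}$ extracts the $(0,2)$-component as a quadratic expression $-\tfrac{\pi\mathbf{i}}{2}\,d\bar z^t A\,d\bar z$ with $A=(Y^{-1})^tT^t\bigl(\tfrac{\partial s}{\partial x}\bigr)^tY^{-1}$. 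Since $d\bar z_i\wedge d\bar z_j$ is antisymmetric, the form $d\bar z^t A\,d\bar z$ vanishes if and only if $A$ is symmetric, i.e. $A=A^t$; multiplying this identity on the left by $Y^t$ and on the right by $Y$ cancels the outer factors and collapses it to $\tfrac{\partial s}{\partial x}T=\bigl(\tfrac{\partial s}{\partial x}T\bigr)^t$, which is the asserted condition, with both implications emerging from the same chain of equivalences. The computation is otherwise routine, so I expect the only real care to be bookkeeping: correctly separating the antiholomorphic parts in the coordinate change and tracking the transposes through $A^t=(Y^{-1})^t\tfrac{\partial s}{\partial x}T\,Y^{-1}$, so that the symmetrization lands on $\tfrac{\partial s}{\partial x}T$ itself rather than on a conjugated variant of it.
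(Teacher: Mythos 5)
Your proof is correct and takes essentially the same route as the paper: the paper omits a proof of Proposition \ref{hol} itself (citing \cite[Proposition 3.1]{bijection}), but its proof of the twisted analogue, Proposition \ref{twhol}, is exactly your computation --- curvature $-2\pi\mathbf{i}\,dx^t\bigl(\frac{\partial s}{\partial x}(x)\bigr)^t dy$, extraction of the $(0,2)$-part in the $d\bar{z}$ coordinates, and reduction to the symmetry of $\frac{\partial s}{\partial x}(x)T$ via antisymmetry of $d\bar{z}_i\wedge d\bar{z}_j$. Your coefficient $-\frac{\pi\mathbf{i}}{2}$ and matrix $A=(Y^{-1})^tT^t\bigl(\frac{\partial s}{\partial x}\bigr)^tY^{-1}$ agree with the paper's expression $2\pi\mathbf{i}\,d\bar{z}^t((T-\bar{T})^{-1})^t\bigl(\frac{\partial s}{\partial x}T\bigr)^t(T-\bar{T})^{-1}d\bar{z}$ after substituting $(T-\bar{T})^{-1}=\frac{1}{2\mathbf{i}}Y^{-1}$.
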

Although we omit the details here, these holomorphic line bundles with integrable connections $(E_{(s,a,q)},\nabla_{(s,a,q)})$ forms a DG-category $DG_{X^n}$ (see \cite{kajiura, exact} for instance). In general, for any $A_{\infty}$-category $\mathscr{A}$, we can construct the enhanced triangulated category $Tr(\mathscr{A})$ by using Bondal-Kapranov-Kontsevich construction \cite{bondal, Kon}. We expect that the DG-category $DG_{X^n}$ generates the bounded derived category of coherent sheaves $D^b(Coh(X^n))$ over $X^n$ in the sense of Bondal-Kapranov-Kontsevich construction :
\begin{equation*}
Tr(DG_{X^n})\cong D^b(Coh(X^n)).
\end{equation*}
At least, it is known that it split generates $D^b(Coh(X^n))$ when $X^n$ is an abelian variety (cf. \cite{orlov, abouzaid}).

Next, we explain the symplectic geometry side, namely, give the objects in the symplectic geometry side corresponding to holomorphic line bundles with integrable connections $(E_{(s,a,q)},\nabla_{(s,a,q)})$. Let us consider the $n$-dimensional submanifold
\begin{equation*}
L_{(s,a)}:=\left\{ \left( \begin{array}{cc} \check{x} \\ s(\check{x}) \end{array} \right)\in \check{X}^n\approx (\mathbb{R}^n/\mathbb{Z}^n)\times (\mathbb{R}^n/\mathbb{Z}^n) \right\}
\end{equation*}
in $\check{X}^n$ by using a locally defined smooth function
\begin{equation*}
s(\check{x}):=\Bigl( s^1(\check{x}),\cdots, s^n(\check{x}) \Bigr)^t
\end{equation*}
satisfying the relations
\begin{equation}
s(x^1+1,x^2,\cdots, x^n)=s(\check{x})+a_1, \ \cdots, \ s(x^1,\cdots ,x^{n-1},x^n+1)=s(\check{x})+a_n. \label{lagsection}
\end{equation}
As explained in the above, we use the notation $a$ in the sense
\begin{equation*}
a:=(a_1,\cdots, a_n)\in M(n;\mathbb{Z}),
\end{equation*}
and for later convenience, we set
\begin{equation*}
\frac{\partial s}{\partial \check{x}}(\check{x}):=\left( \begin{array}{ccc} \frac{\partial s^1}{\partial x^1}(\check{x}) & \cdots & \frac{\partial s^1}{\partial x^n}(\check{x}) \\ \vdots & \ddots & \vdots \\ \frac{\partial s^n}{\partial x^1}(\check{x}) & \cdots & \frac{\partial s^n}{\partial x^n}(\check{x}) \end{array} \right).
\end{equation*}
We further take the trivial complex line bundle $\mathcal{L}_{(s,a,q)}\rightarrow L_{(s,a)}$ with the flat connection
\begin{equation*}
\nabla_{\mathcal{L}_{(s,a,q)}}:=d-2\pi \mathbf{i} q^t d\check{x}
\end{equation*}
which is defined by a constant vector
\begin{equation*}
q:=(q_1,\cdots, q_n)^t\in \mathbb{R}^n.
\end{equation*}
Then, the following proposition holds. Here, we omit its proof since it can be proved in a similar way which is written in subsection 4.1 in \cite{bijection} (see also section 4 in \cite{kazushi} and section 2 in \cite{exact}).
\begin{proposition} \label{fukob}
For a given locally defined smooth function $s$ satisfying the relations $(\ref{lagsection})$ and a given constant vector $q\in \mathbb{R}^n$, the pair $(L_{(s,a)},\mathcal{L}_{(s,a,q)})$ gives an object of the Fukaya category over $\check{X}^n$ if and only if
\begin{equation*}
\frac{\partial s}{\partial \check{x}}(\check{x})T=\left( \frac{\partial s}{\partial \check{x}}(\check{x})T \right)^t
\end{equation*}
holds.
\end{proposition}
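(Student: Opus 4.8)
The plan is to reduce the statement ``$(L_{(s,a)},\mathcal{L}_{(s,a,q)})$ is an object of the Fukaya category'' to the single geometric condition that the complexified symplectic form $\tilde{\omega}^{\vee}$ restricts to zero on $L_{(s,a)}$, and then to translate this vanishing into the asserted matrix identity. Being an object requires two things: that $L_{(s,a)}$ be a closed Lagrangian submanifold of $(\mathbb{R}^{2n}/\mathbb{Z}^{2n},\omega^{\vee})$, and that the unitary local system $\mathcal{L}_{(s,a,q)}$ be compatible with the B-field $B^{\vee}$. Compactness and the fact that $L_{(s,a)}$ is a well-defined submanifold follow from the periodicity relations $(\ref{lagsection})$, while grading, unitarity and unobstructedness are automatic here (these Lagrangians are affine in the flat torus, so the Maslov class vanishes and there are no holomorphic discs); hence the only genuine constraints are $\omega^{\vee}|_{L_{(s,a)}}=0$ and the B-field compatibility. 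Since $\nabla_{\mathcal{L}_{(s,a,q)}}=d-2\pi\mathbf{i}q^{t}d\check{x}$ is flat (as $q$ is constant), compatibility forces $B^{\vee}|_{L_{(s,a)}}=0$ as well, so the two constraints combine into $\tilde{\omega}^{\vee}|_{L_{(s,a)}}=0$.

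First I would parametrize $L_{(s,a)}$ by $\check{x}$, so that along it $\check{y}=s(\check{x})$ and $d\check{y}=\frac{\partial s}{\partial \check{x}}(\check{x})\,d\check{x}$; note that although $s$ is only locally defined, the Jacobian $\frac{\partial s}{\partial \check{x}}$ is globally well-defined because the branches in $(\ref{lagsection})$ differ only by the constants $a_{j}$. Substituting into $\tilde{\omega}^{\vee}=2\pi\mathbf{i}\,d\check{x}^{t}(T^{-1})^{t}d\check{y}$ gives $\tilde{\omega}^{\vee}|_{L_{(s,a)}}=2\pi\mathbf{i}\,d\check{x}^{t}(T^{-1})^{t}\frac{\partial s}{\partial \check{x}}\,d\check{x}$. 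A quadratic expression $d\check{x}^{t}M\,d\check{x}$ only detects the antisymmetric part of $M$, so this $2$-form vanishes precisely when $(T^{-1})^{t}\frac{\partial s}{\partial \check{x}}$ is symmetric. Using $-(T^{-1})^{t}=B_{\rm mat}^{\vee}+\mathbf{i}\,\omega_{\rm mat}^{\vee}$, the real and imaginary parts of this single complex symmetry condition recover exactly $\omega^{\vee}|_{L_{(s,a)}}=0$ (i.e. $\omega_{\rm mat}^{\vee}\frac{\partial s}{\partial \check{x}}$ symmetric) and $B^{\vee}|_{L_{(s,a)}}=0$ (i.e. $B_{\rm mat}^{\vee}\frac{\partial s}{\partial \check{x}}$ symmetric), confirming that the combined Fukaya-object condition is equivalent to the symmetry of $(T^{-1})^{t}\frac{\partial s}{\partial \check{x}}$.

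It then remains to show that $(T^{-1})^{t}\frac{\partial s}{\partial \check{x}}$ is symmetric if and only if $\frac{\partial s}{\partial \check{x}}T$ is symmetric; this is pure linear algebra and uses $\det T\neq 0$. Writing $P:=\frac{\partial s}{\partial \check{x}}$, symmetry of $(T^{-1})^{t}P$ means $(T^{-1})^{t}P=P^{t}T^{-1}$; multiplying on the left by $T^{t}$ and on the right by $T$ and using $T^{t}(T^{-1})^{t}=I$ yields $PT=T^{t}P^{t}=(PT)^{t}$, and each step reverses. Combining this equivalence with the previous paragraph gives both implications of the proposition at once.

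The part requiring the most care is not the computation but the correct bookkeeping of what ``object of the Fukaya category'' imposes. The naive expectation is that only the Lagrangian condition $\omega^{\vee}|_{L_{(s,a)}}=0$, a single symmetry condition on $\omega_{\rm mat}^{\vee}\frac{\partial s}{\partial \check{x}}$, should appear; but the stated condition $\frac{\partial s}{\partial \check{x}}T=\left(\frac{\partial s}{\partial \check{x}}T\right)^{t}$ is genuinely two real symmetry conditions, and the extra one comes precisely from the flatness of $\nabla_{\mathcal{L}_{(s,a,q)}}$ forcing $B^{\vee}|_{L_{(s,a)}}=0$. Thus the main subtlety is to justify that the B-field contributes this second constraint, and I would take care that the normalization in the B-field compatibility condition is consistent with the definition of $\mathcal{L}_{(s,a,q)}$ and with the complex-geometry side, so that the final condition matches the integrability condition of Proposition \ref{hol}.
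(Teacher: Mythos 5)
Your proposal is correct and takes essentially the same route as the paper: the paper omits the proof of Proposition \ref{fukob} itself (referring to earlier work), but its proof of the twisted analogue, Proposition \ref{twistedfukob}, is precisely your argument specialized to $\tau=0$ --- Fukaya's object conditions reduce to $\omega^{\vee}|_{L_{(s,a)}}=0$ together with $\Omega_{\mathcal{L}_{(s,a,q)}}=-\mathbf{i}B^{\vee}|_{L_{(s,a)}}$, the flatness of $\nabla_{\mathcal{L}_{(s,a,q)}}$ forces $B^{\vee}|_{L_{(s,a)}}=0$, and the two resulting symmetry conditions on $\omega_{\rm mat}^{\vee}\frac{\partial s}{\partial \check{x}}$ and $B_{\rm mat}^{\vee}\frac{\partial s}{\partial \check{x}}$ combine into the symmetry of $-(T^{-1})^{t}\frac{\partial s}{\partial \check{x}}$, equivalently of $\frac{\partial s}{\partial \check{x}}T$. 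Your only stylistic deviations --- packaging both constraints at once as $\tilde{\omega}^{\vee}|_{L_{(s,a)}}=0$ and spelling out the linear-algebra equivalence $(T^{-1})^{t}P=P^{t}T^{-1}\Leftrightarrow PT=(PT)^{t}$ --- are harmless and match the paper's computation in substance.
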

We denote the full subcategory of the Fukaya category over $\check{X}^n$ consisting of objects $(L_{(s,a)},\mathcal{L}_{(s,a,q)})$ satisfying the condition $\frac{\partial s}{\partial \check{x}}(\check{x})T=\left( \frac{\partial s}{\partial \check{x}}(\check{x})T \right)^t$ by $Fuk_{\rm sub}(\check{X}^n)$.

We explain the interpretation for the above discussions from the viewpoint of SYZ construction. We can regard the complexified symplectic torus $\check{X}^n$ as the trivial special Lagrangian torus fibration $\check{\pi } : \check{X}^n\rightarrow \mathbb{R}^n/\mathbb{Z}^n$, where $\check{x}$ is the local coordinate system of the base space $\mathbb{R}^n/\mathbb{Z}^n$ and $\check{y}$ is the local coordinate system of the fiber of $\check{\pi } : \check{X}^n\rightarrow \mathbb{R}^n/\mathbb{Z}^n$. Then, we can regard each affine Lagrangian submanifold $L_{(s,a)}$ in $\check{X}^n$ as the graph of the section $s : \mathbb{R}^n/\mathbb{Z}^n\rightarrow \check{X}^n$ of $\check{\pi } : \check{X}^n\rightarrow \mathbb{R}^n/\mathbb{Z}^n$.

Finally, by comparing Proposition \ref{hol} with Proposition \ref{fukob}, we immediately obtain the following proposition. In particular, this indicates that 
\begin{equation*}
(E_{(s,a,q)},\nabla_{(s,a,q)})\in DG_{X^n}
\end{equation*}
and 
\begin{equation*}
(L_{(s,a)},\mathcal{L}_{(s,a,q)})\in Fuk_{\rm sub}(\check{X})
\end{equation*}
are mirror dual to each other.
\begin{proposition} \label{mirror}
For a given locally defined smooth function $s$ satisfying the relations $(\ref{section})$, $(\ref{lagsection})$ and a given constant vector $q\in \mathbb{R}^n$, the connection $\nabla_{(s,a,q)}$ is an integrable connection on $E_{(s,a,q)}\rightarrow X^n$ if and only if the pair $(L_{(s,a)},\mathcal{L}_{(s,a,q)})$ gives an object of the Fukaya category over $\check{X}^n$.
\end{proposition}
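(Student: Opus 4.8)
The plan is to obtain the statement directly by juxtaposing the two characterizations already in hand, namely Proposition \ref{hol} and Proposition \ref{fukob}, rather than by any fresh computation. First I would recall that, by Proposition \ref{hol}, the connection $\nabla_{(s,a,q)}$ defined in (\ref{connection}) is an integrable connection on $E_{(s,a,q)}\to X^n$ precisely when
\[
\frac{\partial s}{\partial x}(x)\,T=\left( \frac{\partial s}{\partial x}(x)\,T \right)^t
\]
holds. Symmetrically, by Proposition \ref{fukob}, the pair $(L_{(s,a)},\mathcal{L}_{(s,a,q)})$ is an object of the Fukaya category over $\check{X}^n$ exactly when
\[
\frac{\partial s}{\partial \check{x}}(\check{x})\,T=\left( \frac{\partial s}{\partial \check{x}}(\check{x})\,T \right)^t
\]
holds. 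The proof then reduces to showing that these two conditions coincide.

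The key observation is that under SYZ construction both fibrations $X^n\to \mathbb{R}^n/\mathbb{Z}^n$ and $\check{X}^n\to \mathbb{R}^n/\mathbb{Z}^n$ share the common base $B=\mathbb{R}^n/\mathbb{Z}^n$, so the base coordinate $x=(x_1,\cdots,x_n)^t$ appearing in the complex geometry side is identified with the base coordinate $\check{x}=(x^1,\cdots,x^n)^t$ appearing in the symplectic geometry side. Under this identification the defining relations (\ref{section}) for the section $s$ entering $E_{(s,a,q)}$ are literally the relations (\ref{lagsection}) for the section $s$ whose graph is $L_{(s,a)}$, both prescribed by the same matrix $a\in M(n;\mathbb{Z})$; hence a single locally defined smooth function $s$ on $B$ simultaneously determines $(E_{(s,a,q)},\nabla_{(s,a,q)})$ and $(L_{(s,a)},\mathcal{L}_{(s,a,q)})$, with the same constant vector $q\in \mathbb{R}^n$. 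Consequently the Jacobian matrices satisfy $\frac{\partial s}{\partial x}(x)=\frac{\partial s}{\partial \check{x}}(\check{x})$, and the two symmetry conditions displayed above are one and the same equation.

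Combining these, I would conclude that $\nabla_{(s,a,q)}$ is integrable if and only if $(L_{(s,a)},\mathcal{L}_{(s,a,q)})$ is a Fukaya-category object, which is exactly the assertion. There is essentially no analytic obstacle here, since all the substantive content has already been absorbed into Propositions \ref{hol} and \ref{fukob}; what remains is purely the bookkeeping identification of the two base coordinates. The only point demanding genuine care is making explicit that the SYZ picture identifies the base parameter $x$ of $X^n$ with the base parameter $\check{x}$ of the special Lagrangian fibration $\check{\pi}:\check{X}^n\to \mathbb{R}^n/\mathbb{Z}^n$, so that the period matrix $T$ and the symmetry requirement transport verbatim between the two sides. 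Once this identification is in place the equivalence is immediate.
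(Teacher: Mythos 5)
Your proposal is correct and matches the paper's own reasoning: the paper also obtains Proposition \ref{mirror} immediately by comparing Proposition \ref{hol} with Proposition \ref{fukob}, since under the SYZ identification of the base coordinates $x$ and $\check{x}$ both reduce to the single symmetry condition $\frac{\partial s}{\partial x}(x)T=\left( \frac{\partial s}{\partial x}(x)T \right)^t$. Your added care in spelling out the coordinate identification is a harmless elaboration of the same argument.
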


\subsection{Deformations of objects}
In this subsection, we define deformations of objects in the respective categories $DG_{X^n}$ and $Fuk_{\rm sub}(\check{X})$ associated to the deformation from $(X^n,\check{X}^n)$ to $(X_{\mathcal{G}_{\tau }}^n,\check{X}_{\mathcal{G}_{\tau }}^n)$. In particular, we prove the analogue of Proposition \ref{mirror} for these deformed objects, and this result is given in Proposition \ref{mainth}. Actually, Proposition \ref{mainth} can be regarded as a part of the statement of the homological mirror symmetry for $(X_{\mathcal{G}_{\tau }}^n,\check{X}_{\mathcal{G}_{\tau }}^n)$, so such deformations seem to be natural in this sense.

First, we discuss the deformation of smooth complex line bundles $E_{(s,a,q)}\rightarrow X^n$ with connections $\nabla_{(s,a,q)}$. In general, when we consider the deformation $M_{\mathcal{G}}:=(M,\mathcal{G})$ of a given complex manifold $M$ by a gerbe $\mathcal{G}$ with 0 and 1-connections, each smooth complex line bundle $L\rightarrow M$ with a connection $\nabla$ is deformed to the twisted smooth complex line bundle $L^{\mathcal{G}}\rightarrow M_{\mathcal{G}}$ with the twisted connection $\nabla^{\mathcal{G}}$. Below, we explain this fact more precisely. Let $M$ be a complex manifold and we take an open cover $\{ U_i \}_{i\in I}$ of $M$. For each smooth complex line bundle $L\rightarrow M$ with a connection $\{ \nabla_i \}_{i\in I}$ which is defined by a family of transition functions $\{ \varphi_{ij} \}_{i,j\in I}$, $\nabla_i$ and $\varphi_{ij}$ satisfy the relation
\begin{equation*}
\nabla_j-\varphi_{ij}^{-1}\nabla_i \varphi_{ij}=0
\end{equation*}
on $U_i\cap U_j$ for each $i,j\in I$. Here, we take the trivial gerbe $\mathcal{G}$ with a non-trivial 0-connection $\{ \nabla_{ij}=d+\omega_{ij} \}_{i,j\in I}$, and consider the deformation $M_{\mathcal{G}}$ of $M$ by this gerbe $\mathcal{G}$ (we only consider the case of the trivial gerbe with a non-trivial 0-connection since $\mathcal{G}_{\tau }$ is trivial as a gerbe). Then, the family of transition functions $\{ \varphi_{ij} \}_{i,j\in I}$ is preserved under this deformation since $\mathcal{G}$ is trivial, namely, the deformation $L^{\mathcal{G}}\rightarrow M_{\mathcal{G}}$ (as a bundle) of $L\rightarrow M$ by $\mathcal{G}$ is defined by the same family of transition functions $\{ \varphi_{ij}^{\mathcal{G}}:=\varphi_{ij} \}_{i,j\in I}$. On the other hand, the twisted connection $\{ \nabla_i^{\mathcal{G}} \}_{i\in I}$ on the deformed object $L^{\mathcal{G}}\rightarrow M_{\mathcal{G}}$ is defined by the relation
\begin{equation}
\nabla_j^{\mathcal{G}}-\left( \varphi_{ij}^{\mathcal{G}} \right)^{-1} \nabla_i^{\mathcal{G}} \left( \varphi_{ij}^{\mathcal{G}} \right)=\omega_{ij} \label{twconn}
\end{equation}
on $U_i\cap U_j$ for each $i,j\in I$. Now, we turn to the case $(E_{(s,a,q)},\nabla_{(s,a,q)})$, namely, define the twisted smooth complex line bundle $E_{(s,a,q)}^{\mathcal{G}_{\tau }}\rightarrow X_{\mathcal{G}_{\tau }}^n$ with the twisted connection $\nabla_{(s,a,q)}^{\mathcal{G}_{\tau }}$ as the deformation of a given smooth complex line bundle $E_{(s,a,q)}\rightarrow X^n$ with a connection $\nabla_{(s,a,q)}$ by $\mathcal{G}_{\tau }$. Similarly as in the case of $(E_{(s,a,q)},\nabla_{(s,a,q)})$, for simplicity, we sometimes denote such a deformed object by $(E_{(s,a,q)}^{\mathcal{G}_{\tau }},\nabla_{(s,a,q)}^{\mathcal{G}_{\tau }})$. The transition functions of $E_{(s,a,q)}^{\mathcal{G}_{\tau }}$ coincide with the transition functions of $E_{(s,a,q)}$ since $\mathcal{G}(I,\mathcal{O},\theta)$ is trivial. On the other hand, the connection $\nabla_{(s,a,q)}$ on $E_{(s,a,q)}$ is locally deformed to the following twisted connection $\nabla_{(s,a,q)}^{\mathcal{G}_{\tau }}$ (see also the definition (\ref{connection})) :
\begin{equation}
\nabla_{(s,a,q)}^{\mathcal{G}_{\tau }}=d+\omega_{(s,a,q)}^{\mathcal{G}_{\tau }}, \ \ \ \omega_{(s,a,q)}^{\mathcal{G}_{\tau}}:=-2\pi \mathbf{i} \Bigl( s(x)^t+q^tT+x^t\tau^t \Bigr)dy. \label{twistedconnection}
\end{equation}
In fact, we can easily check that this $\nabla_{(s,a,q)}^{\mathcal{G}_{\tau }}$ is compatible with the transition functions of $E_{(s,a,q)}^{\mathcal{G}_{\tau }}$, i.e., the relations of the form (\ref{twconn}) hold. Although the transition functions are preserved under this deformation, we see that the connection 1-form $\omega_{(s,a,q)}$ is twisted by the 1-form
\begin{equation}
-2\pi \mathbf{i}x^t \tau^t dy \label{1form}
\end{equation}
under this deformation by comparing the definition (\ref{connection}) with the definition (\ref{twistedconnection}). We can further obtain the 2-form
\begin{equation}
-2\pi \mathbf{i}dx^t \tau^t dy \label{2form}
\end{equation}
from the 1-form (\ref{1form}) by considering its derivative, and this 2-form (\ref{2form}) is the 1-connection itself over $\mathcal{G}(I,\mathcal{O},\theta)$ compatible with the 0-connection $\nabla$ (see the definition (\ref{B})). Sometimes the 2-form 
\begin{equation*}
2\pi dx^t \tau^t dy
\end{equation*}
in the local expression (\ref{2form}) is also called the B-field. Here, we discuss the holomorphicity of $(E_{(s,a,q)}^{\mathcal{G}_{\tau }},\nabla_{(s,a,q)}^{\mathcal{G}_{\tau}})$. We see that the following proposition holds.
\begin{proposition} \label{twhol}
For a given locally defined smooth function $s$ satisfying the relation $(\ref{section})$ and a given constant vector $q\in \mathbb{R}^n$, the twisted connection $\nabla_{(s,a,q)}^{\mathcal{G}_{\tau}}$ is a twisted integrable connection on $E_{(s,a,q)}^{\mathcal{G}_{\tau }}\rightarrow X_{\mathcal{G}_{\tau }}^n$ if and only if
\begin{equation*}
\frac{\partial s}{\partial x}(x)T=\left( \frac{\partial s}{\partial x}(x)T \right)^t
\end{equation*}
holds.
\end{proposition}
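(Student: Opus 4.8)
The plan is to show that the twisted connection $\nabla_{(s,a,q)}^{\mathcal{G}_{\tau}}$ is integrable precisely when the given symmetry condition holds, where ``integrable'' here means that the $(0,2)$-part of the curvature vanishes (so that the $\bar\partial$-operator induced by the connection defines a holomorphic structure on the twisted bundle). The key observation is that the holomorphicity of a connection on a complex torus is detected entirely by its curvature $2$-form, and the deformation from $\nabla_{(s,a,q)}$ to $\nabla_{(s,a,q)}^{\mathcal{G}_{\tau}}$ adds only the globally defined $1$-form $-2\pi \mathbf{i}x^t \tau^t dy$ from (\ref{1form}), whose exterior derivative is the constant $2$-form $-2\pi \mathbf{i}dx^t \tau^t dy$ from (\ref{2form}). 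Since this additional term is the same on every chart, the deformed connection is genuinely compatible with the (unchanged) transition functions, and the whole problem reduces to a curvature computation.

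First I would compute the curvature $2$-form $\Omega^{\mathcal{G}_{\tau}}_{(s,a,q)} = d\,\omega_{(s,a,q)}^{\mathcal{G}_{\tau}}$ from the local expression (\ref{twistedconnection}). Because $\omega_{(s,a,q)}^{\mathcal{G}_{\tau}} = -2\pi\mathbf{i}\bigl(s(x)^t + q^tT + x^t\tau^t\bigr)dy$ depends only on $x$ (and not on $y$), its differential is
\begin{equation*}
\Omega^{\mathcal{G}_{\tau}}_{(s,a,q)} = -2\pi\mathbf{i}\,dx^t\Bigl(\tfrac{\partial s}{\partial x}(x)^t + \tau^t\Bigr)dy,
\end{equation*}
i.e. the curvature of the original connection (which carries $\tfrac{\partial s}{\partial x}(x)^t$) shifted by the constant B-field contribution $\tau^t$. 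The next step is to re-express this in terms of the complex coordinate $z = x + Ty$, so that $dz = dx + T\,dy$; inverting to write $dx$ and $dy$ in terms of $dz$ and $d\bar z$, I would extract the $(0,2)$-component of $\Omega^{\mathcal{G}_{\tau}}_{(s,a,q)}$. The connection is a twisted integrable (holomorphic) connection exactly when this $(0,2)$-part vanishes.

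The main content is then to identify the vanishing of the $(0,2)$-part with the stated matrix equation. Following the computation behind Proposition \ref{hol}, the substitution $dx = \tfrac{1}{2\mathbf{i}}\,\bar Y^{-1}$-type change of basis (using $Y = \mathrm{Im}T$) turns the antiholomorphic part into a condition that the matrix $\bigl(\tfrac{\partial s}{\partial x}(x) + \tau\bigr)T$ be symmetric. Since $\tau T$ may fail to be symmetric (indeed we are in the regime $\tau T \ne (\tau T)^t$), one might expect the symmetry condition to become $\bigl(\tfrac{\partial s}{\partial x}(x)+\tau\bigr)T = \bigl(\bigl(\tfrac{\partial s}{\partial x}(x)+\tau\bigr)T\bigr)^t$. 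The crucial point — and the step I expect to be the main obstacle — is to verify that the $\tau$-dependent terms drop out: because the twisting $2$-form $-2\pi\mathbf{i}\,dx^t\tau^t dy$ is itself the B-field defining the generalized complex structure on $X_{\mathcal{G}_{\tau}}^n$, the notion of ``twisted holomorphic'' is adjusted accordingly, and the antisymmetric part $\tau T - (\tau T)^t$ is absorbed into the definition of the deformed holomorphic structure rather than obstructing it. After carefully tracking this absorption, the residual condition is exactly $\tfrac{\partial s}{\partial x}(x)T = \bigl(\tfrac{\partial s}{\partial x}(x)T\bigr)^t$, matching Proposition \ref{hol} verbatim. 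I would close by noting that this recovers the undeformed criterion, which is the expected behavior since the transition functions are unchanged and only the connection is twisted by a constant B-field.
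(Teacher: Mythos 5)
There is a genuine gap, and it sits exactly where you flag ``the main obstacle'': you never supply the mechanism that removes the $\tau$-term, and the mechanism is not an ``absorption into the deformed holomorphic structure'' but the \emph{definition of curvature for a gerbe-twisted connection}. Your step 1 already goes wrong: the 1-form $-2\pi\mathbf{i}\,x^t\tau^t dy$ is \emph{not} globally defined on the torus (it jumps under $x_j\mapsto x_j+1$ by $-2\pi\mathbf{i}\,\tau_j^t dy$, which is precisely the 0-connection form $\omega_{mm'}^{ll'}$ up to the convention factor), so $\nabla^{\mathcal{G}_{\tau}}_{(s,a,q)}$ is \emph{not} compatible with the unchanged transition functions in the ordinary sense; it satisfies the twisted relation (\ref{twconn}) with right-hand side $\omega_{ij}\neq 0$. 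Consequently $d\omega^{\mathcal{G}_{\tau}}_{(s,a,q)}$ is not the curvature of the twisted connection. The curvature is obtained by subtracting the 1-connection of $\mathcal{G}_{\tau}$ (the 2-form in (\ref{B})):
\begin{equation*}
\Omega^{\mathcal{G}_{\tau}}_{(s,a,q)}
= d\omega^{\mathcal{G}_{\tau}}_{(s,a,q)}-\Bigl(-2\pi\mathbf{i}\,dx^t\tau^t dy\Bigr)
= -2\pi\mathbf{i}\,dx^t\left(\frac{\partial s}{\partial x}(x)\right)^t dy,
\end{equation*}
which is well defined globally because $dA_j-dA_i=d\omega_{ij}=\Omega_{ij}=(\delta\beta)_{ij}$ for a 1-connection $\beta$ compatible with the 0-connection. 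This is how the paper argues: the \emph{entire} B-field contribution cancels at the level of the curvature 2-form, before any $(0,2)$-projection, and the remainder of the proof is the same linear-algebra step as in Proposition \ref{hol}, giving $\bigl(\Omega^{\mathcal{G}_{\tau}}_{(s,a,q)}\bigr)^{(0,2)}=2\pi\mathbf{i}\,d\bar z^t((T-\bar T)^{-1})^t\bigl(\frac{\partial s}{\partial x}(x)T\bigr)^t(T-\bar T)^{-1}d\bar z$, which vanishes iff $\frac{\partial s}{\partial x}(x)T$ is symmetric.

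Your substitute story is also wrong in detail, not merely unproved. If one insists on $F=dA$, the $(0,2)$-condition is that $\bigl(\frac{\partial s}{\partial x}(x)+\tau\bigr)T$ be symmetric, which differs from the target condition by the nonvanishing antisymmetric part of $\tau T$; and your own framing (integrability $=$ vanishing of the $(0,2)$-part of the curvature, so that $\bar\partial$ defines the holomorphic structure) leaves no room to then redefine ``twisted holomorphic'' mid-proof so that this discrepancy disappears. What actually happens is that the \emph{whole} 2-form $-2\pi\mathbf{i}\,dx^t\tau^t dy$ — not just the antisymmetric matrix part $\tau T-(\tau T)^t$ — is subtracted from $d\omega^{\mathcal{G}_{\tau}}_{(s,a,q)}$ by the definition of twisted curvature relative to the connective structure $(\nabla,B)$ of $\mathcal{G}_{\tau}$; note that this subtracted B-field has a nonzero $(0,2)$-part exactly because $\tau T\neq(\tau T)^t$, so the cancellation you need cannot be deduced from symmetry considerations on the undeformed $\bar\partial$-operator alone. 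Once you replace your ``absorption'' paragraph with the identity $\Omega^{\mathcal{G}_{\tau}}_{(s,a,q)}=d\omega^{\mathcal{G}_{\tau}}_{(s,a,q)}+2\pi\mathbf{i}\,dx^t\tau^t dy$ and correct the claim of ordinary compatibility to the twisted compatibility (\ref{twconn}), your computation reduces verbatim to the paper's proof.
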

\begin{proof}
In general, for a given smooth complex vector bundle with a connection, such a connection is an integrable connection on the given smooth complex vector bundle if and only if the (0,2)-part of its curvature form vanishes. Hence, although we may consider the condition such that the (0,2)-part of the curvature form of the twisted connection $\nabla_{(s,a,q)}^{\mathcal{G}_{\tau }}$ vanishes, we need to note that the curvature form $\Omega_{(s,a,q)}^{\mathcal{G}_{\tau }}$ of $\nabla_{(s,a,q)}^{\mathcal{G}_{\tau }}$ is expressed locally as 
\begin{equation*}
\Omega_{(s,a,q)}^{\mathcal{G}_{\tau }}:=d\omega_{(s,a,q)}^{\mathcal{G}_{\tau }}-\Bigl( -2\pi \mathbf{i} dx^t\tau^t dy \Bigr)=-2\pi \mathbf{i}dx^t \left( \frac{\partial s}{\partial x}(x) \right)^t dy,
\end{equation*}
where the 2-form 
\begin{equation*}
-2\pi \mathbf{i}dx^t \tau^t dy
\end{equation*}
can be regarded as the 1-connection over $\mathcal{G}(I,\mathcal{O},\theta)$ compatible with the 0-connection $\nabla$ as explained in the above. In particular, this description is natural from the viewpoint of the fact that the transition functions of $E_{(s,a,q)}^{\mathcal{G}_{\tau }}$ coincide with the transition functions of $E_{(s,a,q)}$. By direct calculations, the (0,2)-part of $\Omega_{(s,a,q)}^{\mathcal{G}_{\tau }}$ turns out that
\begin{equation*}
\left( \Omega_{(s,a,q)}^{\mathcal{G}_{\tau}} \right)^{(0,2)}=2\pi \mathbf{i} d\bar{z}^t ((T-\bar{T})^{-1})^t \left( \frac{\partial s}{\partial x}(x)T \right)^t (T-\bar{T})^{-1} d\bar{z},
\end{equation*}
where $d\bar{z}:=(d\bar{z}_1,\cdots, d\bar{z}_n)^t$. Thus, $\left( \Omega_{(s,a,q)}^{\mathcal{G}_{\tau}} \right)^{(0,2)}=0$ is equivalent to that $((T-\bar{T})^{-1})^t \left( \frac{\partial s}{\partial x}(x)T \right)^t (T-\bar{T})^{-1}$ is a symmetric matrix, i.e.,
\begin{equation*}
\frac{\partial s}{\partial x}(x)T=\left( \frac{\partial s}{\partial x}(x)T \right)^t.
\end{equation*}
\end{proof}
Moreover, we can actually consider the deformation $DG_{X_{\mathcal{G}_{\tau }}^n}$ of the DG-category $DG_{X^n}$ (see \cite{kajiura, exact} for instance) associated to the deformation which is described in the above. We give a rough explanation of this description below. Objects are twisted holomorphic line bundles $E_{(s,a,q)}^{\mathcal{G}_{\tau }}\rightarrow X_{\mathcal{G}_{\tau }}^n$ with twisted integrable connections $\nabla_{(s,a,q)}^{\mathcal{G}_{\tau }}$. In general, the space of morphisms in $DG_{X^n}$ is given by
\begin{align}
&\mathrm{Hom}_{DG_{X^n}}((E_{(s,a,q)},\nabla_{(s,a,q)}),(E_{(s',a',q')},\nabla_{(s',a',q')})) \notag \\ &:=\Gamma(E_{(s,a,q)},E_{(s',a',q')})\bigotimes_{C^{\infty}(X^n)}\Omega^{0,*}(X^n), \label{mor}
\end{align}
where $\Gamma(E_{(s,a,q)},E_{(s',a',q')})$ is the space of homomorphisms from $E_{(s,a,q)}$ to $E_{(s',a',q')}$ and $\Omega^{0,*}(X^n)$ is the space of anti-holomorphic differential forms on $X^n$. Therefore, we may define the space of morphisms in $DG_{X_{\mathcal{G}_{\tau }}}$ by replacing $\Gamma(E_{(s,a,q)},E_{(s',a',q')})$ in the definition (\ref{mor}) to the space of homomorphisms as twisted holomorphic line bundles. In particular, the differential structure on the space of morphisms (\ref{mor}) is defined by using the integrable connections $\nabla_{(s,a,q)}$ and $\nabla_{(s',a',q')}$ (the product structure on the space of morphisms (\ref{mor}) is given by the ``standard'' product structure), so we may define the differential structure on the space of morphisms in $DG_{X_{\mathcal{G}_{\tau }}^n}$ by using the twisted integrable connections of objects. On the other hand, for a given algebraic variety $X$ over $\mathbb{C}$, roughly speaking, if $\alpha$ is an element in the Brauer group of $X$, there exists an $\alpha$-twisted vector bundle $\mathcal{E}_{\alpha}\rightarrow X$ and we can consider the sheaf of Azumaya algebras $\mathcal{A}:=\mathrm{End}(\mathcal{E}_{\alpha})$. Then, it is known that there exists an equivalence between the abelian category $Coh(X,\alpha)$ of $\alpha$-twisted sheaves on $X$ and the abelian category $Coh(X,\mathcal{A})$ of right coherent $\mathcal{A}$-modules on $X$ :
\begin{equation}
\mathrm{Hom}(\mathcal{E}_{\alpha},-) : Coh(X,\alpha)\stackrel{\sim}{\rightarrow} Coh(X,\mathcal{A}). \label{morita}
\end{equation}
These facts are explained in \cite{cal} for example (cf. \cite{kap1, kap2}). In this paper, although we do not assume that $X^n$ is an abelian variety, namely, we regard $X^n$ as a general complex torus, it seems that the deformation from $DG_{X^n}$ to $DG_{X_{\mathcal{G}_{\tau}}^n}$ explained in the above can also be interpreted as an analogue of the equivalence (\ref{morita}) :
\begin{equation*}
E_{(0,0,0)}^{\mathcal{G}_{\tau}}\otimes - : DG_{X^n}\stackrel{\sim}{\rightarrow} DG_{X_{\mathcal{G}_{\tau}}^n}.
\end{equation*}

Next, we discuss the deformation $(L_{(s,a)}^{\tau},\mathcal{L}_{(s,a,q)}^{\tau})$ of a given arbitrary object $(L_{(s,a)},\mathcal{L}_{(s,a,q)})\in Fuk_{\rm sub}(\check{X}^n)$. Let us consider the $n$-dimensional submanifold
\begin{equation*}
L_{(s,a)}^{\tau}:=\left\{ \left( \begin{array}{cc} \check{x} \\ s(\check{x})+\tau \check{x} \end{array} \right)\in \check{X}_{\mathcal{G}_{\tau }}^n\approx (\mathbb{R}^n/\mathbb{Z}^n)\times (\mathbb{R}^n/\mathbb{Z}^n) \right\}
\end{equation*}
in $\check{X}_{\mathcal{G}_{\tau }}^n$ by using the locally defined smooth function $s$ satisfying the relations (\ref{lagsection}) and $\tau\in M(n;\mathbb{Z})$. We further take the trivial complex line bundle $\mathcal{L}_{(s,a,q)}^{\tau}\rightarrow L_{(s,a)}^{\tau}$ with the flat connection
\begin{equation*}
\nabla_{\mathcal{L}_{(s,a,q)}^{\tau}}:=d-2\pi \mathbf{i} q^t d\check{x}
\end{equation*}
which is defined by the constant vector
\begin{equation*}
q:=(q_1,\cdots, q_n)^t\in \mathbb{R}^n.
\end{equation*}
Then, we obtain the following proposition as an analogue of Proposition \ref{fukob}.
\begin{proposition} \label{twistedfukob}
For a given locally defined smooth function $s$ satisfying the relations $(\ref{lagsection})$ and a given constant vector $q\in \mathbb{R}^n$, the pair $(L_{(s,a)}^{\tau},\mathcal{L}_{(s,a,q)}^{\tau})$ gives an object of the Fukaya category over $\check{X}_{\mathcal{G}_{\tau}}^n$ if and only if
\begin{equation*}
\frac{\partial s}{\partial \check{x}}(\check{x})T=\left( \frac{\partial s}{\partial \check{x}}(\check{x})T \right)^t
\end{equation*}
holds.
\end{proposition}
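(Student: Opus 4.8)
The plan is to follow the same strategy as the proof of Proposition \ref{twhol} on the complex side: I would restrict the complexified symplectic form $\tilde{\omega}_{\tau}^{\vee}$ of $\check{X}_{\mathcal{G}_{\tau}}^n$ to the submanifold $L_{(s,a)}^{\tau}$ and show that the terms produced by the deformation parameter $\tau$ cancel, so that the restriction coincides with the restriction of the undeformed form $\tilde{\omega}^{\vee}$ to $L_{(s,a)}\subset\check{X}^n$. Because the flat connection $\nabla_{\mathcal{L}_{(s,a,q)}^{\tau}}$ carries exactly the same data as $\nabla_{\mathcal{L}_{(s,a,q)}}$ and the remaining brane data (grading, spin) is unaffected by the shift, this reduces the statement verbatim to Proposition \ref{fukob}.

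Concretely, I would parametrise $L_{(s,a)}^{\tau}$ by the base coordinate $\check{x}$ through $\check{y}=s(\check{x})+\tau\check{x}$, so that on the submanifold
\[
d\check{y}=\left(\frac{\partial s}{\partial \check{x}}(\check{x})+\tau\right)d\check{x}.
\]
Substituting this into
\[
\tilde{\omega}_{\tau}^{\vee}=2\pi \mathbf{i}\,d\check{x}^t (T^{-1})^t d\check{y}+2\pi \mathbf{i}\,d\check{x}^t\bigl(-(T^{-1})^t\tau\bigr)d\check{x},
\]
the term $2\pi\mathbf{i}\,d\check{x}^t(T^{-1})^t\tau\,d\check{x}$ coming from $d\check{y}$ is exactly cancelled by the deforming term, leaving
\[
\tilde{\omega}_{\tau}^{\vee}\big|_{L_{(s,a)}^{\tau}}=2\pi\mathbf{i}\,d\check{x}^t(T^{-1})^t\frac{\partial s}{\partial \check{x}}(\check{x})\,d\check{x},
\]
which is precisely $\tilde{\omega}^{\vee}\big|_{L_{(s,a)}}$. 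Since a quadratic expression $d\check{x}^t A\,d\check{x}$ vanishes if and only if $A$ is symmetric (using $dx^i\wedge dx^j=-dx^j\wedge dx^i$, valid also for complex $A$), the isotropy and B-field compatibility conditions packaged in $\tilde{\omega}_{\tau}^{\vee}\big|_{L_{(s,a)}^{\tau}}=0$ are literally the conditions appearing in the proof of Proposition \ref{fukob}; invoking that proposition then yields $\frac{\partial s}{\partial \check{x}}(\check{x})T=\left(\frac{\partial s}{\partial \check{x}}(\check{x})T\right)^t$.

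I would also record the following conceptual shortcut as a remark. The map $\varphi_{g_{\tau}}(\check{x},\check{y})=(\check{x},\tau\check{x}+\check{y})$ sends $L_{(s,a)}$ onto $L_{(s,a)}^{\tau}$ and satisfies $(\varphi_{g_{\tau}})^*\tilde{\omega}_{\tau}^{\vee}=\tilde{\omega}^{\vee}$, so it carries objects of the Fukaya category over $\check{X}^n$ to objects over $\check{X}_{\mathcal{G}_{\tau}}^n$; the statement is then immediate from Proposition \ref{fukob}. The only real obstacle is the bookkeeping in the cancellation: one must check that the sign and matrix placement of the deforming term $-(T^{-1})^t\tau$ in $\tilde{\omega}_{\tau}^{\vee}$ match the shift $\tau\check{x}$ in the definition of $L_{(s,a)}^{\tau}$. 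This is the mirror counterpart of the fact, used in Proposition \ref{twhol}, that the extra $1$-form $-2\pi\mathbf{i}\,x^t\tau^t dy$ twisting the connection drops out of the integrability computation, leaving the curvature—and hence the condition—unchanged.
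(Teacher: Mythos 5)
Your proposal is correct and follows essentially the same route as the paper: the paper's proof checks the two Fukaya-object conditions $\omega_{\tau}^{\vee}|_{L_{(s,a)}^{\tau}}=0$ and $B_{\tau}^{\vee}|_{L_{(s,a)}^{\tau}}=0$ separately via the tangent vectors $\xi_{\tau}^j$, and in each the $\tau$-terms cancel exactly as in your single complexified computation with $d\check{y}=\bigl(\frac{\partial s}{\partial \check{x}}(\check{x})+\tau\bigr)d\check{x}$, yielding the same symmetry condition on $(T^{-1})^t\frac{\partial s}{\partial \check{x}}(\check{x})$. Your shortcut via $\varphi_{g_{\tau}}$ is also consistent with the paper, which records precisely this identification $(L_{(s,a)}^{\tau},\mathcal{L}_{(s,a,q)}^{\tau})\cong(\varphi_{g_{\tau}}(L_{(s,a)}),(\varphi_{g_{\tau}}^{-1})^*\mathcal{L}_{(s,a,q)})$ as a remark immediately after the proof.
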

\begin{proof}
Below, we denote the curvature form of the flat connection $\nabla_{\mathcal{L}_{(s,a,q)}^{\tau}}$ on $\mathcal{L}_{(s,a,q)}^{\tau}$ by $\Omega_{\mathcal{L}_{(s,a,q)}^{\tau}}$, i.e., $\Omega_{\mathcal{L}_{(s,a,q)}^{\tau}}=0$. According to \cite[Definition 1.1]{Fuk}, we may consider the following conditions :  
\begin{align}
&L_{(s,a)}^{\tau} \ \mathrm{becomes} \ \mathrm{a} \ \mathrm{Lagrangian} \ \mathrm{submanifold} \ \mathrm{in} \ \check{X}_{\mathcal{G}_{\tau}}^n. \label{F1} \\
&\mathcal{L}_{(s,a,q)}^{\tau}\rightarrow L_{(s,a)}^{\tau} \ \mathrm{with} \ \nabla_{\mathcal{L}_{(s,a,q)}^{\tau}} \ \mathrm{satisfies} \ \Omega_{\mathcal{L}_{(s,a,q)}^{\tau}}=-\mathbf{i}B_{\tau}^{\vee}|_{L_{(s,a)}^{\tau}}. \label{F2}
\end{align}
We first consider the condition (\ref{F1}). A basis of the tangent vector space of $L_{(s,a)}^{\tau}$ at $\left( \begin{array}{cc} \check{x} \\ s(\check{x})+\tau \check{x} \end{array} \right)\in \check{X}_{\mathcal{G}_{\tau }}^n$ is given by
\begin{equation*}
\xi_{\tau }^j:=\frac{\partial}{\partial x^j}+\left( \frac{\partial}{\partial \check{y}} \right)^t \left\{ \left( \frac{\partial s}{\partial \check{x}}(\check{x}) \right)_j+\tau _j \right\}, \ \ \ j=1,\cdots, n,
\end{equation*}
where $\tau _j:=(\tau_{1j},\cdots, \tau_{nj})^t\in \mathbb{Z}^n$. Hence, by direct calculations, we see
\begin{align*}
\omega_{\tau}^{\vee}(\xi_{\tau}^j,\xi_{\tau}^k)&=2\pi (\omega_{j1}^{\vee},\cdots, \omega_{jn}^{\vee})\left\{ \left(\frac{\partial s}{\partial \check{x}}(\check{x}) \right)_k+\tau _k \right\}-2\pi (\omega_{k1}^{\vee},\cdots, \omega_{kn}^{\vee})\left\{ \left(\frac{\partial s}{\partial \check{x}}(\check{x}) \right)_j+\tau _j \right\} \\
&\hspace{3.5mm} -2\pi (\omega_{j1}^{\vee},\cdots, \omega_{jn}^{\vee})\tau _k+2\pi (\omega_{k1}^{\vee},\cdots, \omega_{kn}^{\vee})\tau _j \\
&=2\pi (\omega_{j1}^{\vee},\cdots, \omega_{jn}^{\vee})\left(\frac{\partial s}{\partial \check{x}}(\check{x}) \right)_k-2\pi (\omega_{k1}^{\vee},\cdots, \omega_{kn}^{\vee})\left(\frac{\partial s}{\partial \check{x}}(\check{x}) \right)_j
\end{align*}
for each pair $(j,k)\in \{ 1,\cdots, n \}\times \{ 1,\cdots, n \}$. This implies that $L_{(s,a)}^{\tau}$ becomes a Lagrangian submanifold in $\check{X}_{\mathcal{G}_{\tau}}^n$, i.e., $\omega_{\tau}^{\vee}|_{L_{(s,a)}^{\tau}}=0$ holds if and only if
\begin{equation}
\omega_{\rm mat}^{\vee}\frac{\partial s}{\partial \check{x}}(\check{x})=\left( \omega_{\rm mat}^{\vee}\frac{\partial s}{\partial \check{x}}(\check{x}) \right)^t \label{twistedlag}
\end{equation}
holds. Next, we consider the condition (\ref{F2}). We see
\begin{align*}
-\mathbf{i}B_{\tau}^{\vee}|_{L_{(s,a)}^{\tau}}&=-2\pi \mathbf{i}d\check{x}^t B_{\rm mat}^{\vee} d\check{y}+2\pi \mathbf{i}d\check{x}^t B_{\rm mat}^{\vee} \tau d\check{x}|_{L_{(s,a)}^{\tau}} \\
&=-2\pi \mathbf{i}d\check{x}^t B_{\rm mat}^{\vee} \left\{ \frac{\partial s}{\partial \check{x}}(\check{x}) d\check{x}+\tau d\check{x} \right\}+2\pi \mathbf{i}d\check{x}^t B_{\rm mat}^{\vee} \tau d\check{x} \\
&=-2\pi \mathbf{i}d\check{x}^t B_{\rm mat}^{\vee} \frac{\partial s}{\partial \check{x}}(\check{x}) d\check{x} \\
&=0
\end{align*}
since $\Omega_{\mathcal{L}_{(s,a,q)}^{\tau}}=0$, so we have
\begin{equation}
B_{\rm mat}^{\vee}\frac{\partial s}{\partial \check{x}}(\check{x})=\left( B_{\rm mat}^{\vee}\frac{\partial s}{\partial \check{x}}(\check{x}) \right)^t. \label{twistedb}
\end{equation}
Thus, by the definition of $\omega_{\rm mat}^{\vee}$ and $B_{\rm mat}^{\vee}$, we can easily verify that the relations (\ref{twistedlag}) and (\ref{twistedb}) are equivalent to
\begin{equation*}
-(T^{-1})^t \frac{\partial s}{\partial \check{x}}(\check{x})=\left( -(T^{-1})^t \frac{\partial s}{\partial \check{x}}(\check{x}) \right)^t,
\end{equation*}
namely, 
\begin{equation*}
\frac{\partial s}{\partial \check{x}}(\check{x})T=\left( \frac{\partial s}{\partial \check{x}}(\check{x})T \right)^t.
\end{equation*}
This completes the proof.
\end{proof}
We denote the full subcategory of the Fukaya category over $\check{X}_{\mathcal{G}_{\tau}}^n$ consisting of objects $(L_{(s,a)}^{\tau},\mathcal{L}_{(s,a,q)}^{\tau})$ satisfying the condition $\frac{\partial s}{\partial \check{x}}(\check{x})T=\left( \frac{\partial s}{\partial \check{x}}(\check{x})T \right)^t$ by $Fuk_{\rm sub}(\check{X}_{\mathcal{G}_{\tau}}^n)$. Similarly as in the case of $\check{X}^n$, we can regard the complexified symplectic torus $\check{X}_{\mathcal{G}_{\tau}}^n$ as the trivial special Lagrangian torus fibration $\check{\pi}_{\tau} : \check{X}_{\mathcal{G}_{\tau}}^n\rightarrow \mathbb{R}^n/\mathbb{Z}^n$ from the viewpoint of SYZ construction. Then, under the deformation from $\check{X}^n$ to $\check{X}_{\mathcal{G}_{\tau}}^n$, we see that the Lagrangian section $s : \mathbb{R}^n/\mathbb{Z}^n\rightarrow \check{X}^n$ is twisted by the function $\tau \check{x}$ by comparing the definition of $L_{(s,a)}$ with the definition of $L_{(s,a)}^{\tau}$. In fact, we can also interpret this deformation by using the (complexified) symplectomorphism 
\begin{equation*}
\varphi_{g_{\tau}} : \check{X}^n\stackrel{\sim}{\rightarrow} \check{X}_{\mathcal{G}_{\tau}}^n
\end{equation*}
which is given in section 3. Recall that
\begin{equation*}
g_{\tau}:=\left( \begin{array}{cc} I_n & O \\ \tau & I_n \end{array} \right)\in SL(2n;\mathbb{Z})
\end{equation*}
and $\varphi_{g_{\tau}} : \check{X}^n\stackrel{\sim}{\rightarrow} \check{X}_{\mathcal{G}_{\tau}}^n$ is defined by
\begin{equation*}
\varphi_{g_{\tau}}\left( \begin{array}{cc} \check{x} \\ \check{y} \end{array} \right):=g_{\tau}\left( \begin{array}{cc} \check{x} \\ \check{y} \end{array} \right).
\end{equation*}
Then, we can regard each object $(L_{(s,a)}^{\tau},\mathcal{L}_{(s,a,q)}^{\tau})\in Fuk_{\rm sub}(\check{X}_{\mathcal{G}_{\tau}}^n)$ as $(\varphi_{g_{\tau}}(L_{(s,a)}),(\varphi_{g_{\tau}}^{-1})^*\mathcal{L}_{(s,a,q)})$ :
\begin{equation*}
\begin{CD}
\mathcal{L}_{(s,a,q)}^{\tau}\cong (\varphi_{g_{\tau}}^{-1})^*\mathcal{L}_{(s,a,q)} @>>> \mathcal{L}_{(s,a,q)} \\
@VVV  @VVV \\
L_{(s,a)}^{\tau}\cong \varphi_{g_{\tau}}(L_{(s,a)}) @>>\varphi_{g_{\tau}}^{-1}> L_{(s,a)}.
\end{CD}
\end{equation*}

Thus, by comparing Proposition \ref{twhol} with Proposition \ref{twistedfukob}, we immediately obtain the following.
\begin{proposition} \label{mainth}
For a given locally defined smooth function $s$ satisfying the relations $(\ref{section})$, $(\ref{lagsection})$ and a given constant vector $q\in \mathbb{R}^n$, the twisted connection $\nabla_{(s,a,q)}^{\mathcal{G}_{\tau}}$ is a twisted integrable connection on $E_{(s,a,q)}^{\mathcal{G}_{\tau}}\rightarrow X_{\mathcal{G}_{\tau}}^n$ if and only if the pair $(L_{(s,a)}^{\tau},\mathcal{L}_{(s,a,q)}^{\tau})$ gives an object of the Fukaya category over $\check{X}_{\mathcal{G}_{\tau}}^n$.
\end{proposition}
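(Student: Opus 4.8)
The plan is to obtain the statement as an immediate consequence of the two characterizations already established, namely Proposition \ref{twhol} on the complex geometry side and Proposition \ref{twistedfukob} on the symplectic geometry side, by observing that both reduce to one and the same symmetry condition on $s$.

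First I would recall from Proposition \ref{twhol} that, for the given $s$ and $q$, the twisted connection $\nabla_{(s,a,q)}^{\mathcal{G}_{\tau}}$ is a twisted integrable connection on $E_{(s,a,q)}^{\mathcal{G}_{\tau}}\rightarrow X_{\mathcal{G}_{\tau}}^n$ if and only if $\frac{\partial s}{\partial x}(x)T=\left( \frac{\partial s}{\partial x}(x)T \right)^t$. Concretely, this came from the computation that the $(0,2)$-part of the curvature $\Omega_{(s,a,q)}^{\mathcal{G}_{\tau}}$ --- after subtracting the $1$-connection $2$-form $-2\pi\mathbf{i}\,dx^t\tau^t dy$ of $\mathcal{G}(I,\mathcal{O},\theta)$ --- is a conjugate of $\frac{\partial s}{\partial x}(x)T$ by $(T-\bar{T})^{-1}$, so its vanishing is exactly the symmetry of $\frac{\partial s}{\partial x}(x)T$.

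Next I would recall from Proposition \ref{twistedfukob} that $(L_{(s,a)}^{\tau},\mathcal{L}_{(s,a,q)}^{\tau})$ is an object of the Fukaya category over $\check{X}_{\mathcal{G}_{\tau}}^n$ if and only if $\frac{\partial s}{\partial \check{x}}(\check{x})T=\left( \frac{\partial s}{\partial \check{x}}(\check{x})T \right)^t$; there the $\tau$-twist of the section and the $\tau$-twists of the symplectic form and the B-field cancel, so the Lagrangian condition (\ref{twistedlag}) and the curvature-matching condition (\ref{twistedb}) together collapse to precisely this symmetry. The essential point is that under the SYZ picture the base coordinate $x$ of $X^n\to \mathbb{R}^n/\mathbb{Z}^n$ and the base coordinate $\check{x}$ of $\check{X}_{\mathcal{G}_{\tau}}^n\to \mathbb{R}^n/\mathbb{Z}^n$ are identified, and $s$ is the same locally defined function subject to (\ref{section}), (\ref{lagsection}); hence $\frac{\partial s}{\partial x}(x)T$ and $\frac{\partial s}{\partial \check{x}}(\check{x})T$ denote the same matrix. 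Chaining the two biconditionals then yields the claimed equivalence.

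Since both of the genuinely computational steps were already carried out in Propositions \ref{twhol} and \ref{twistedfukob}, there is no real obstacle here; the only thing to be careful about is confirming that the two symmetry conditions are literally the same object once the two base tori are identified, which is immediate. In this sense Proposition \ref{mainth} is the gerby-deformed analogue of Proposition \ref{mirror}, and its proof is the same formal comparison.
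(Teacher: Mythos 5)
Your proposal is correct and is essentially identical to the paper's own argument: the paper also obtains Proposition \ref{mainth} immediately by comparing Proposition \ref{twhol} with Proposition \ref{twistedfukob}, both of which reduce to the single symmetry condition $\frac{\partial s}{\partial x}(x)T=\left( \frac{\partial s}{\partial x}(x)T \right)^t$ under the SYZ identification of the base coordinates. Your added remark that $\frac{\partial s}{\partial x}(x)T$ and $\frac{\partial s}{\partial \check{x}}(\check{x})T$ denote the same matrix is exactly the (implicit) point that makes the comparison legitimate, just as in the undeformed case of Proposition \ref{mirror}.
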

Clearly, Proposition \ref{mainth} indicates that 
\begin{equation*}
(E_{(s,a,q)}^{\mathcal{G}_{\tau}},\nabla_{(s,a,q)}^{\mathcal{G}_{\tau}})\in DG_{X_{\mathcal{G}_{\tau}}^n}
\end{equation*}
and 
\begin{equation*}
(L_{(s,a)}^{\tau},\mathcal{L}_{(s,a,q)}^{\tau})\in Fuk_{\rm sub}(\check{X}_{\mathcal{G}_{\tau}}^n)
\end{equation*}
are mirror dual to each other. Here, recall that the homological mirror symmetry conjecture for $(X_{\mathcal{G}_{\tau}}^n,\check{X}_{\mathcal{G}_{\tau}}^n)$ states the existence of an equivalence
\begin{equation*}
D^b(Coh(X_{\mathcal{G}_{\tau}}^n))\cong Tr(Fuk(\check{X}_{\mathcal{G}_{\tau}}^n))
\end{equation*}
as triangulated categories, where $D^b(Coh(X_{\mathcal{G}_{\tau}}^n))$ is the bounded derived category of twisted coherent sheaves over $X_{\mathcal{G}_{\tau}}^n$. Similarly as in the case of $(X^n,\check{X}^n)$, it is expected that the DG-category $DG_{X_{\mathcal{G}_{\tau}}^n}$ and the $A_{\infty}$-category $Fuk_{\rm sub}(\check{X}_{\mathcal{G}_{\tau}}^n)$ generate $D^b(Coh(X_{\mathcal{G}_{\tau}}^n))$ and $Tr(Fuk(\check{X}_{\mathcal{G}_{\tau}}^n))$, respectively. We believe that these deformed categories $DG_{X_{\mathcal{G}_{\tau}}^n}$ and $Fuk_{\rm sub}(\check{X}_{\mathcal{G}_{\tau}}^n)$ play a central role in a proof of the homological mirror symmetry conjecture for $(X_{\mathcal{G}_{\tau}}^n,\check{X}_{\mathcal{G}_{\tau}}^n)$.

\section{Deformed Hermitian-Yang-Mills connections and special Lagrangian submanifolds}
In Proposition \ref{mainth}, we verified that 
\begin{equation*}
(E_{(s,a,q)}^{\mathcal{G}_{\tau}},\nabla_{(s,a,q)}^{\mathcal{G}_{\tau}})\in DG_{X_{\mathcal{G}_{\tau}}^n}
\end{equation*}
and 
\begin{equation*}
(L_{(s,a)}^{\tau},\mathcal{L}_{(s,a,q)}^{\tau})\in Fuk_{\rm sub}(\check{X}_{\mathcal{G}_{\tau}}^n)
\end{equation*}
are mirror dual to each other. In this section, for given mirror dual objects $(E_{(s,a,q)}^{\mathcal{G}_{\tau}},\nabla_{(s,a,q)}^{\mathcal{G}_{\tau}})\in DG_{X_{\mathcal{G}_{\tau}}^n}$ and $(L_{(s,a)}^{\tau},\mathcal{L}_{(s,a,q)}^{\tau})\in Fuk_{\rm sub}(\check{X}_{\mathcal{G}_{\tau}}^n)$, we further prove the following : the twisted integrable connection $\nabla_{(s,a,q)}^{\mathcal{G}_{\tau}}$ defines a deformed Hermitian-Yang-Mills connection on the twisted holomorphic line bundle $E_{(s,a,q)}^{\mathcal{G}_{\tau}}\rightarrow X_{\mathcal{G}_{\tau}}^n$ if and only if the Lagrangian submanifold $L_{(s,a)}^{\tau}$ becomes a special Lagrangian submanifold in $\check{X}_{\mathcal{G}_{\tau}}^n$.

In order to observe the correspondence between deformed Hermitian-Yang-Mills connections and special Lagrangian submanifolds, we need to consider K$\ddot{\mathrm{a}}$hler (Calabi-Yau) structures on $(X_{\mathcal{G}_{\tau}}^n,\check{X}_{\mathcal{G}_{\tau}}^n)$, namely, a (complexified) symplectic structure on $X_{\mathcal{G}_{\tau}}^n$ and a complex structure on $\check{X}_{\mathcal{G}_{\tau}}^n$. In particular, we first discuss K$\ddot{\mathrm{a}}$hler (Calabi-Yau) structures on $(X^n,\check{X}^n)$ since we can obtain the deformed mirror pair $(X_{\mathcal{G}_{\tau}}^n,\check{X}_{\mathcal{G}_{\tau}}^n)$ from $(X^n,\check{X}^n)$.

Let us consider a K$\ddot{\mathrm{a}}$hler (Calabi-Yau) structure on $X^n$. Here, we focus on a 2-form $\omega$ on $X^n$ and a metric $g$ on $X^n$ which are locally defined by
\begin{align*}
&\omega:=2\pi dx^t Y dy+2\pi dy^t X^t Y dy, \\
&g:=2\pi dx^t dx+2\pi dx^t X dy+2\pi dy^t X^t dx+2\pi dy^t (X^t X+Y^t Y)dy,
\end{align*}
respectively, where the product in the definition of $\omega$ is the wedge product $\wedge$, and the product in the definition of $g$ is the tensor product $\otimes$. Moreover, the complex structure $J\in \mathrm{End}(\Gamma(TX^n))$ is expressed locally as 
\begin{equation*}
J\left( \frac{\partial}{\partial x}^t, \frac{\partial}{\partial y}^t \right)=\left( \frac{\partial}{\partial x}^t, \frac{\partial}{\partial y}^t \right) \left( \begin{array}{ccc} -XY^{-1} & -Y-XY^{-1}X \\ Y^{-1} & Y^{-1}X \end{array} \right),
\end{equation*}
where $TX^n$ is the tangent bundle on $X^n$, and $\Gamma(TX^n)$ is the space of smooth sections of $TX^n$ (see also the local expression (\ref{comp})). Then, we obtain the following lemma.
\begin{lemma} \label{kahler}
The 2-form $\omega$ and the metric $g$ defines a symplectic form on $X^n$ and a Riemannian metric on $X^n$, respectively, and $J$, $\omega$, $g$ satisfy the relation
\begin{equation}
\omega(V,W)=g(JV,W) \label{compatible}
\end{equation}
for arbitrary $V$, $W\in \Gamma(TX^n)$.
\end{lemma}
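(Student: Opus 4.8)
The plan is to translate everything into the global frame $\left(\frac{\partial}{\partial x}^t,\frac{\partial}{\partial y}^t\right)$ and reduce the three assertions to linear algebra. First I would record the symmetric matrix $G$ representing $g$ and the antisymmetric matrix $\Omega$ representing $\omega$ (so that $g(V,W)=v^tGw$ and $\omega(V,W)=v^t\Omega w$ when $V,W$ have coordinate columns $v,w$):
\[
G=2\pi\begin{pmatrix} I_n & X \\ X^t & X^tX+Y^tY \end{pmatrix},\qquad \Omega=2\pi\begin{pmatrix} O & Y \\ -Y^t & X^tY-Y^tX \end{pmatrix}.
\]
Here the $(y,y)$-block of $\Omega$ is read off from the fact that under the wedge product $dy^t(X^tY)dy$ only sees the antisymmetric part $X^tY-(X^tY)^t=X^tY-Y^tX$, while $dx^tYdy$ contributes the off-diagonal blocks $\pm Y,\mp Y^t$.

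For the metric statement I would exhibit the factorization $G=2\pi M^tM$ with $M=\begin{pmatrix} I_n & X \\ O & Y \end{pmatrix}$; since $Y=\mathrm{Im}\,T$ is positive definite it is invertible, hence $M$ is invertible and $G$, which is manifestly symmetric, is positive definite. Thus $g$ is a Riemannian metric. For $\omega$, closedness is immediate: the entries of $X$ and $Y$ are constants, so $\Omega$ has constant coefficients and $d\omega=0$; nondegeneracy I would deduce from the compatibility relation proved next.

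The core step is the identity $\omega(V,W)=g(JV,W)$. Writing $J$ through its action on coordinate columns by the matrix $\mathcal{J}=\begin{pmatrix} -XY^{-1} & -Y-XY^{-1}X \\ Y^{-1} & Y^{-1}X \end{pmatrix}$ (so that $JV$ has coordinate column $\mathcal{J}v$), we have $g(JV,W)=(\mathcal{J}v)^tGw=v^t\mathcal{J}^tGw$, and the claim becomes the single matrix identity $\Omega=\mathcal{J}^tG$. I would verify this by blockwise multiplication; the repeated cancellations rely on $(Y^{-1})^t=(Y^t)^{-1}$ together with the $X^tX+Y^tY$ appearing in $G_{22}$, e.g. the $(1,2)$-block collapses to $(Y^t)^{-1}Y^tY=Y$ and the $(2,2)$-block to $X^tY-Y^tX$, exactly reproducing $\Omega$. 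Once $\Omega=\mathcal{J}^tG$ is established, nondegeneracy of $\omega$ follows at once, since $\det\Omega=\det(\mathcal{J}^t)\det G$ with $\det G\neq0$ and $(\det\mathcal{J})^2=\det(\mathcal{J}^2)=\det(-I_{2n})=1$, using $J^2=-\mathrm{id}$.

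I expect the main obstacle to be purely the bookkeeping in this block computation: correctly extracting the antisymmetric matrix $\Omega$ from a $2$-form written with mixed $dx\wedge dy$ and $dy\wedge dy$ terms, and then keeping every transpose and every $Y^{-1}$ factor straight so that the cancellations in $\mathcal{J}^tG$ go through. The conceptual content is light—compatibility is equivalent to one matrix identity—but a single mishandled transpose would destroy the cancellations, so I would carry out the four block products explicitly rather than appeal to symmetry.
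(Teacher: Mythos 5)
Your proposal is correct and follows essentially the same route as the paper: both translate everything into the coordinate matrices, prove positive definiteness of $g$ via the factorization $G=2\pi M^tM$ (equivalent to the paper's congruence $2\pi\bigl(\begin{smallmatrix} I_n & O \\ X^t & I_n \end{smallmatrix}\bigr)\bigl(\begin{smallmatrix} I_n & O \\ O & Y^tY \end{smallmatrix}\bigr)\bigl(\begin{smallmatrix} I_n & X \\ O & I_n \end{smallmatrix}\bigr)$), and reduce compatibility to the single block identity $\mathcal{J}^tG=\Omega$, which is precisely the paper's computation of $g(JV,W)=\omega(V,W)$. The only cosmetic difference is that you deduce nondegeneracy of $\omega$ from that identity together with $J^2=-\mathrm{id}$, whereas the paper checks it directly by the congruence $\Omega=2\pi\bigl(\begin{smallmatrix} I_n & O \\ X^t & I_n \end{smallmatrix}\bigr)\bigl(\begin{smallmatrix} O & Y \\ -Y^t & O \end{smallmatrix}\bigr)\bigl(\begin{smallmatrix} I_n & X \\ O & I_n \end{smallmatrix}\bigr)$; both are valid.
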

\begin{proof}
It is clear that $\omega$ is closed. Moreover, the matrix
\begin{align*}
&\left( \begin{array}{ccc} \omega(\frac{\partial}{\partial x_1}, \frac{\partial}{\partial x_1}) & \cdots & \omega(\frac{\partial}{\partial x_1}, \frac{\partial}{\partial y_n}) \\
\vdots & \ddots & \vdots \\
\omega(\frac{\partial}{\partial y_n}, \frac{\partial}{\partial x_1}) & \cdots & \omega(\frac{\partial}{\partial y_n}, \frac{\partial}{\partial y_n}) \end{array} \right) \\
&=2\pi \left( \begin{array}{cccc} O & Y \\ -Y^t & X^t Y-Y^t X \end{array} \right) \\
&=2\pi \left( \begin{array}{cccc} I_n & O \\ X^t & I_n \end{array} \right) \left( \begin{array}{cccc} O & Y \\ -Y^t & O \end{array} \right) \left( \begin{array}{cccc} I_n & X \\ O & I_n \end{array} \right)
\end{align*}
is regular since $Y$ is positive definite. Hence, we see that $\omega$ is non-degenerate. Similarly, the positive definiteness of the matrix
\begin{align}
&\left( \begin{array}{ccc} g(\frac{\partial}{\partial x_1}, \frac{\partial}{\partial x_1}) & \cdots & g(\frac{\partial}{\partial x_1}, \frac{\partial}{\partial y_n}) \\
\vdots & \ddots & \vdots \\
g(\frac{\partial}{\partial y_n}, \frac{\partial}{\partial x_1}) & \cdots & g(\frac{\partial}{\partial y_n}, \frac{\partial}{\partial y_n}) \end{array} \right) \notag \\
&=2\pi \left( \begin{array}{cccc} I_n & X \\ X^t & X^t X+Y^t Y \end{array} \right) \notag \\
&=2\pi \left( \begin{array}{cccc} I_n & O \\ X^t & I_n \end{array} \right) \left( \begin{array}{cccc} I_n & O \\ O & Y^t Y \end{array} \right) \left( \begin{array}{cccc} I_n & X \\ O & I_n \end{array} \right) \label{g}
\end{align}
follows from the positive definiteness of $Y$, and it is clear that the matrix (\ref{g}) is a symmetric matrix. Finally, we prove the relation (\ref{compatible}). For arbitrary $V$, $W\in \Gamma(TX^n)$, we can locally express them as
\begin{equation*}
V=\frac{\partial}{\partial x}^t a+\frac{\partial}{\partial y}^t b, \ \ \ W=\frac{\partial}{\partial x}^t c+\frac{\partial}{\partial y}^t d
\end{equation*}
by using constant vectors $a:=(a_1,\cdots, a_n)^t$, $b:=(b_1,\cdots, b_n)^t$, $c:=(c_1,\cdots, c_n)^t$, $d:=(d_1,\cdots, d_n)^t\in \mathbb{R}^n$. Then, we have
\begin{align*}
\omega(V, W)&=2\pi (a^t, b^t) \left( \begin{array}{cccc} O & Y \\ -Y^t & X^t Y-Y^t X \end{array} \right) \left( \begin{array}{ccc} c \\ d \end{array} \right), \\
g(JV, W)&=2\pi (a^t, b^t) \left( \begin{array}{cccc} -XY^{-1} & -Y-XY^{-1}X \\ Y^{-1} & Y^{-1}X \end{array} \right)^t \left( \begin{array}{cccc} I_n & X \\ X^t & X^t X+Y^t Y \end{array} \right) \left( \begin{array}{ccc} c \\ d \end{array} \right) \\
&=2\pi (a^t, b^t) \left( \begin{array}{cccc} O & Y \\ -Y^t & X^t Y-Y^t X \end{array} \right) \left( \begin{array}{ccc} c \\ d \end{array} \right)
\end{align*}
by direct calculations, so indeed, the relation (\ref{compatible}) holds.
\end{proof}
We see that $(X^n=\mathbb{C}^n/\mathbb{Z}^n\oplus T\mathbb{Z}^n, \omega,g)$ is a K$\ddot{\mathrm{a}}$hler (Calabi-Yau) manifold by Lemma \ref{kahler}, and hereafter, we denote again 
\begin{equation*}
X^n:=\Bigl( \mathbb{C}^n/\mathbb{Z}^n\oplus T\mathbb{Z}^n, \ \omega \Bigr)
\end{equation*}
in order to simplify the notation (we omit the Riemannian metric $g$ since we will not use it later). Moreover, the generalized complex structure
\begin{equation*}
\mathcal{I}_{\omega} : \Gamma(TX^n\oplus T^*X^n)\rightarrow \Gamma(TX^n\oplus T^*X^n)
\end{equation*}
associated to the symplectic form $\omega$ is expressed locally as
\begin{align*}
&\mathcal{I}_{\omega} \left( \frac{\partial}{\partial x}^t, \frac{\partial}{\partial y}^t, dx^t, dy^t \right) \notag \\
&=\left( \frac{\partial}{\partial x}^t, \frac{\partial}{\partial y}^t, dx^t, dy^t \right) \notag \\
&\hspace{3.5mm} \left( \begin{array}{cccc} O & O & (Y^{-1})^tX^t-XY^{-1} & -(Y^{-1})^t \\ O & O & Y^{-1} & O \\ O & -Y & O & O \\ Y^t & Y^tX-X^tY & O & O \end{array} \right).
\end{align*}
Hence, we can calculate the mirror dual generalized complex structure $\check{\mathcal{I}}_{\omega}$ of $\mathcal{I}_{\omega}$ locally as
\begin{align}
&\check{\mathcal{I}}_{\omega} \left( \frac{\partial}{\partial \check{x}}^t, \frac{\partial}{\partial \check{y}}^t, d\check{x}^t, d\check{y}^t \right) \notag \\
&=\left( \frac{\partial}{\partial \check{x}}^t, \frac{\partial}{\partial \check{y}}^t, d\check{x}^t, d\check{y}^t \right) \notag \\
&\hspace{3.5mm} \left( \begin{array}{cccc} O & -(Y^{-1})^t & (Y^{-1})^tX^t-XY^{-1} & O \\ Y^t & O & O & Y^tX-X^tY \\ O & O & O & -Y \\ O & O & Y^{-1} & O\end{array} \right) \notag \\
&=\left( \frac{\partial}{\partial \check{x}}^t, \frac{\partial}{\partial \check{y}}^t, d\check{x}^t, d\check{y}^t \right) \notag \\
&\hspace{3.5mm} \left( \begin{array}{cccc} I_n & O & O & -X \\ O & I_n & X^t & O \\ O & O & I_n & O \\ O & O & O & I_n \end{array} \right) \left( \begin{array}{cccc} O & -(Y^{-1})^t & O & O \\ Y^t & O & O & O \\ O & O & O & -Y \\ O & O & Y^{-1} & O\end{array} \right) \left( \begin{array}{cccc} I_n & O & O & X \\ O & I_n & -X^t & O \\ O & O & I_n & O \\ O & O & O & I_n \end{array} \right) \label{noncommutative}
\end{align}
according to the definition (\ref{mirrordef}). This result implies that $\check{X}^n\approx \mathbb{R}^{2n}/\mathbb{Z}^{2n}$ has the structure of the $n$-dimensional complex torus whose period matrix is given by $\mathbf{i}(Y^{-1})^t$. In particular, the local expression (\ref{noncommutative}) tells us that $\check{\mathcal{I}}_{\omega}$ is a ``$\beta$-field'' transform $\mathcal{I}_{\mathbf{i}(Y^{-1})^t}(\beta)$ of the generalized complex structure $\mathcal{I}_{\mathbf{i}(Y^{-1})^t}$ over the usual (commutative) $n$-dimensional complex torus $\mathbb{C}^n/\mathbb{Z}^n\oplus \mathbf{i}(Y^{-1})^t \mathbb{Z}^n$. It is clear that this $\beta$-field transform preserves the complex structure of it, i.e., $\check{\mathcal{I}}_{\omega}=\mathcal{I}_{\mathbf{i}(Y^{-1})^t}(\beta)=\mathcal{I}_{\mathbf{i}(Y^{-1})^t}$, if and only if the relation $X^tY=(X^tY)^t$ holds by the local expression (\ref{noncommutative}), and the corresponding $\beta$-field transform with the condition $X^tY\not=(X^tY)^t$ should be regarded as the noncommutative deformation of ``$\theta_2$ type'' in the sense of \cite{kajiura}. We can easily check that $(\mathbb{C}^n/\mathbb{Z}^n\oplus \mathbf{i}(Y^{-1})^t \mathbb{Z}^n,\tilde{\omega}^{\vee})$ is a (noncomutative) K$\ddot{\mathrm{a}}$hler (Calabi-Yau) manifold, and hereafter, we denote again 
\begin{equation*}
\check{X}^n:=\Bigl( \mathbb{C}^n/\mathbb{Z}^n\oplus \mathbf{i}(Y^{-1})^t \mathbb{Z}^n, \ \tilde{\omega}^{\vee} \Bigr)
\end{equation*}
in order to simplify the notation.

Let us consider a K$\ddot{\mathrm{a}}$hler (Calabi-Yau) structure on $X_{\mathcal{G}_{\tau}}^n$. Recall that the B-field transform $\mathcal{I}_J(B)$ of $\mathcal{I}_J$ over $X^n$ is expressed locally as
\begin{align*}
&\mathcal{I}_J(B) \left( \frac{\partial}{\partial x}^t, \frac{\partial}{\partial y}^t, dx^t, dy^t \right) \\
&=\left( \frac{\partial}{\partial x}^t, \frac{\partial}{\partial y}^t, dx^t, dy^t \right) \left( \begin{array}{cccc} I_n & O & O & O \\ O & I_n & O & O \\ O & -\tau ^t & I_n & O \\ \tau & O & O & I_n \end{array} \right) \mathcal{I}_J \left( \begin{array}{cccc} I_n & O & O & O \\ O & I_n & O & O \\ O & \tau ^t & I_n & O \\ -\tau & O & O & I_n \end{array} \right),
\end{align*}
so the following B-field transform $\mathcal{I}_{\omega}(B)$ of $\mathcal{I}_{\omega}$ over $X^n$ also defines a generalized complex structure over $X_{\mathcal{G}_{\tau}}^n$ :
\begin{align*}
&\mathcal{I}_{\omega}(B) \left( \frac{\partial}{\partial x}^t, \frac{\partial}{\partial y}^t, dx^t, dy^t \right) \\
&=\left( \frac{\partial}{\partial x}^t, \frac{\partial}{\partial y}^t, dx^t, dy^t \right) \left( \begin{array}{cccc} I_n & O & O & O \\ O & I_n & O & O \\ O & -\tau ^t & I_n & O \\ \tau & O & O & I_n \end{array} \right) \mathcal{I}_{\omega} \left( \begin{array}{cccc} I_n & O & O & O \\ O & I_n & O & O \\ O & \tau ^t & I_n & O \\ -\tau & O & O & I_n \end{array} \right).
\end{align*}
This description implies that the complexified symplectic form
\begin{equation*}
\tilde{\omega}_{\tau}:=\omega-\mathbf{i}\Bigl( 2\pi dx^t \tau^t dy \Bigr)=2\pi dx^t Y dy+2\pi dy^t X^tY dy-2\pi \mathbf{i}dx^t \tau^t dy
\end{equation*}
on $X_{\mathcal{G}_{\tau}}^n$ can be obtained by twisting the symplectic form $\omega$ on $X^n$ using the B-field $2\pi dx^t \tau^t dy$. It is easy to check that $(\mathbb{C}^n/\mathbb{Z}^n\oplus T\mathbb{Z}^n,\mathcal{G}_{\tau},\tilde{\omega}_{\tau})$ is a gerby deformed K$\ddot{\mathrm{a}}$hler (Calabi-Yau) manifold, and hereafter, we denote again
\begin{equation*}
X_{\mathcal{G}_{\tau}}^n:=\Bigl( \mathbb{C}^n/\mathbb{Z}^n\oplus T\mathbb{Z}^n, \ \mathcal{G}_{\tau}, \ \tilde{\omega}_{\tau} \Bigr)
\end{equation*}
for simplicity. Moreover, in order to consider the complex structure on $\check{X}_{\mathcal{G}_{\tau}}^n$ according to the definition (\ref{mirrordef}), we assume the condition $\mathrm{det}(-\tau-\mathbf{i}Y^t)\not=0$. If we do not assume this condition, then we need to employ the other one which is proposed in our previous work \cite{kazushi} as the definition of mirror partners. Therefore, for simplicity, we assume the above condition here, and by the definition (\ref{mirrordef}), the mirror dual generalized complex structure $\check{\mathcal{I}}_{\omega}(B)$ of $\mathcal{I}_{\omega}(B)$ is calculated as follows :
\begin{align*}
&\check{\mathcal{I}}_{\omega}(B) \left( \frac{\partial}{\partial \check{x}}^t, \frac{\partial}{\partial \check{y}}^t, d\check{x}^t, d\check{y}^t \right) \\
&=\left( \frac{\partial}{\partial \check{x}}^t, \frac{\partial}{\partial \check{y}}^t, d\check{x}^t, d\check{y}^t \right) \\
&\hspace{3.5mm} \left( \begin{array}{cccc} I_n & O & O & -X \\ O & I_n & X^t & X^t\tau^t-\tau X \\ O & O & I_n & O \\ O & O & O & I_n \end{array} \right) \\
&\hspace{3.5mm} \left( \begin{array}{cccc} (Y^{-1})^t\tau & -(Y^{-1})^t & O & O \\ Y^t+\tau Y^{-1}\tau & -\tau (Y^{-1})^t & O & O \\ O & O & -\tau^t Y^{-1} & -Y-\tau^t Y^{-1}\tau^t \\ O & O & Y^{-1} & Y^{-1}\tau^t \end{array} \right) \\
&\hspace{3.5mm} \left( \begin{array}{cccc} I_n & O & O & X \\ O & I_n & -X^t & \tau X-X^t\tau^t \\ O & O & I_n & O \\ O & O & O & I_n \end{array} \right).
\end{align*}
This result implies that $\check{X}_{\mathcal{G}_{\tau}}^n\approx \mathbb{R}^{2n}/\mathbb{Z}^{2n}$ has the structure of the $n$-dimensional complex torus whose period matrix is given by $(-\tau-\mathbf{i}Y^t)^{-1}$, and it generally becomes a noncommutative $n$-dimensional complex torus similarly as in the case of $\check{X}^n$ (we will not discuss the noncommutativity in this paper). It is easy to check that $(\mathbb{C}^n/\mathbb{Z}^n\oplus (-\tau-\mathbf{i}Y^t)^{-1}\mathbb{Z}^n,\tilde{\omega}_{\tau}^{\vee})$ is a (noncommutative) K$\ddot{\mathrm{a}}$hler (Calabi-Yau) manifold, and hereafter, we denote again
\begin{equation*}
\check{X}_{\mathcal{G}_{\tau}}^n:=\Bigl( \mathbb{C}^n/\mathbb{Z}^n\oplus (-\tau-\mathbf{i}Y^t)^{-1}\mathbb{Z}^n, \ \tilde{\omega}_{\tau}^{\vee} \Bigr)
\end{equation*}
for simplicity. Also, we locally express the complex coordinates $\check{z}:=(z^1,\cdots, z^n)^t$ of $\check{X}_{\mathcal{G}_{\tau}}^n$ as $\check{z}=\check{x}+(-\tau-\mathbf{i}Y^t)^{-1}\check{y}$.

Here, we recall the definitions of deformed Hermitian-Yang-Mills connections \cite{leung, m} (see also \cite{jacob}) and special Lagrangian submanifolds as a preparation of main discussions.
\begin{definition} \label{dhymeq}
Let $(M,\omega)$ be an $n$-dimensional K$\ddot{\mathrm{a}}$hler manifold with a K$\ddot{\mathrm{a}}$hler form $\omega$ and $L\rightarrow M$ be a smooth complex line bundle with a Hermitian metric $h$. Then, if a connection $\nabla$ preserving the metric $h$ on $L$ satisfies the conditions\footnote{Although the equation $\mathrm{Im}e^{\mathbf{i}\theta}(\omega+F)^n=0$ is used in the original paper \cite{leung} instead of $\mathrm{Im}e^{\mathbf{i}\theta}(\omega-F)^n=0$ in Definition \ref{dhymeq}, for example, in \cite{jacob}, Jacob and Yau use the relation $\mathrm{Im}e^{\mathbf{i}\theta}(\omega-F)^n=0$ as the definition of the deformed Hermitian-Yang-Mills equation. In this paper, we employ the relation $\mathrm{Im}e^{\mathbf{i}\theta}(\omega-F)^n=0$ as the definition of the deformed Hermitian-Yang-Mills equation according to \cite{jacob} and so on.}
\begin{equation*}
F^{(0,2)}=0, \ \ \ \mathrm{Im}e^{\mathbf{i}\theta}(\omega-F)^n=0
\end{equation*}
for some $\theta\in \mathbb{R}$, we call the connection $\nabla$ a deformed Hermitian-Yang-Mills connection of phase $\theta$ on $L$. Here, $F$ is the curvature form of the connection $\nabla$, and the second equation is called the deformed Hermitian-Yang-Mills equation.
\end{definition}
\begin{definition}
Let $(M,\Omega)$ be a K$\ddot{\mathrm{a}}$hler manifold with a nowhere vanishing holomorphic top form $\Omega$. Then, if a Lagrangian submanifold $L$ in $M$ satisfies the condition
\begin{equation*}
\mathrm{Im}\bigl( e^{\mathbf{i}\theta}\Omega \bigr)|_L=0
\end{equation*}
for some $\theta\in \mathbb{R}$, we call the Lagrangian submanifold $L$ a special Lagrangian submanifold of phase $\theta$ in $M$.
\end{definition}
Now, we present the following theorem.
\begin{theo} \label{dhymsplag}
The relation
\begin{equation*}
\mathrm{Im}e^{\mathbf{i}\theta}\bigl( \omega-\Omega_{(s,a,q)}^{\mathcal{G}_{\tau}} \bigr)^n=0
\end{equation*}
holds for some $\theta\in \mathbb{R}$ if and only if the relation
\begin{equation*}
\mathrm{Im}\bigl( e^{\mathbf{i}\theta} \check{\Omega} \bigr)|_{L_{(s,a)}^{\tau}}=0
\end{equation*}
holds for the same $\theta\in \mathbb{R}$, where $\check{\Omega}:=dz^1\wedge\cdots \wedge dz^n$ is the nowhere vanishing holomorphic $(n,0)$-form on $\check{X}_{\mathcal{G}_{\tau}}^n$.
\end{theo}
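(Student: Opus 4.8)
The plan is to reduce each side of the asserted equivalence to the statement that one explicit complex number, multiplied by $e^{\mathbf{i}\theta}$, is real, and then to check that the two complex numbers literally coincide. Throughout I abbreviate $Y:=\mathrm{Im}T$ and $S:=\frac{\partial s}{\partial x}(x)=\frac{\partial s}{\partial \check{x}}(\check{x})$ (the same Jacobian matrix read in the two coordinate systems); recall that, since the objects under consideration lie in $DG_{X_{\mathcal{G}_{\tau}}^n}$ and $Fuk_{\rm sub}(\check{X}_{\mathcal{G}_{\tau}}^n)$, the condition $ST=(ST)^t$ holds (Proposition \ref{twhol} and Proposition \ref{twistedfukob}), which is exactly what makes $\nabla_{(s,a,q)}^{\mathcal{G}_{\tau}}$ a genuine connection on a holomorphic bundle and $L_{(s,a)}^{\tau}$ a Lagrangian submanifold, so that both ``deformed Hermitian--Yang--Mills connection'' and ``special Lagrangian'' make sense.

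First I would treat the symplectic side. Writing $P:=(-\tau-\mathbf{i}Y^t)^{-1}$ for the period matrix of $\check{X}_{\mathcal{G}_{\tau}}^n$, so that $\check{z}=\check{x}+P\check{y}$, I restrict $\check{\Omega}=dz^1\wedge\cdots\wedge dz^n$ to $L_{(s,a)}^{\tau}=\{(\check{x},s(\check{x})+\tau\check{x})\}$. On $L_{(s,a)}^{\tau}$ one has $d\check{y}=(S+\tau)\,d\check{x}$, hence $d\check{z}|_{L_{(s,a)}^{\tau}}=(I_n+P(S+\tau))\,d\check{x}$. The key simplification is the identity $I_n+P\tau=P(P^{-1}+\tau)=-\mathbf{i}PY^t$, which absorbs the shift $\tau$ coming from the deformation of the section and gives $d\check{z}|_{L_{(s,a)}^{\tau}}=P(S-\mathbf{i}Y^t)\,d\check{x}$. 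Therefore
\begin{equation*}
\check{\Omega}|_{L_{(s,a)}^{\tau}}=\frac{\det(S-\mathbf{i}Y^t)}{\det(-\tau-\mathbf{i}Y^t)}\,dx^1\wedge\cdots\wedge dx^n,
\end{equation*}
so the special Lagrangian condition is equivalent to $e^{\mathbf{i}\theta}\dfrac{\det(S-\mathbf{i}Y^t)}{\det(-\tau-\mathbf{i}Y^t)}\in\mathbb{R}$.

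Next I would treat the complex side. Since $\omega=2\pi dx^tYdy+2\pi dy^tX^tYdy$ and $\Omega_{(s,a,q)}^{\mathcal{G}_{\tau}}=-2\pi\mathbf{i}\,dx^tS^tdy$ contain only $dx\wedge dy$ and $dy\wedge dy$ parts, in the $n$-th power $(\omega-\Omega_{(s,a,q)}^{\mathcal{G}_{\tau}})^n$ only products of the $dx\wedge dy$ parts survive (each of the $n$ factors must supply one $dx_j$), so the $dy\wedge dy$ part drops out and
\begin{equation*}
(\omega-\Omega_{(s,a,q)}^{\mathcal{G}_{\tau}})^n=(2\pi)^n n!\,\det(Y+\mathbf{i}S^t)\,dx_1\wedge dy_1\wedge\cdots\wedge dx_n\wedge dy_n,
\end{equation*}
while $\tilde{\omega}_{\tau}^{n}$ equals the same real constant times $\det(Y-\mathbf{i}\tau^t)$ wedged with the same volume form. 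The crucial point is that, because $\tilde{\omega}_{\tau}$ is not of type $(1,1)$ when $\tau T\neq(\tau T)^t$, the phase in the deformed Hermitian--Yang--Mills equation on $X_{\mathcal{G}_{\tau}}^n$ must be measured against the Calabi--Yau volume $\tilde{\omega}_{\tau}^{n}$ rather than against the underlying real volume form (this is the natural reading, since for $\tau=0$ it reduces to the usual normalization by $\omega^n$). With this normalization the deformed Hermitian--Yang--Mills equation becomes $e^{\mathbf{i}\theta}\dfrac{\det(Y+\mathbf{i}S^t)}{\det(Y-\mathbf{i}\tau^t)}\in\mathbb{R}$.

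Finally I would match the two conditions. Factoring $-\mathbf{i}$ out of each determinant gives the elementary identities $\det(S-\mathbf{i}Y^t)=(-\mathbf{i})^n\det(Y+\mathbf{i}S^t)$ and $\det(-\tau-\mathbf{i}Y^t)=(-\mathbf{i})^n\det(Y-\mathbf{i}\tau^t)$, so the common factor $(-\mathbf{i})^n$ cancels in the ratio and
\begin{equation*}
\frac{\det(S-\mathbf{i}Y^t)}{\det(-\tau-\mathbf{i}Y^t)}=\frac{\det(Y+\mathbf{i}S^t)}{\det(Y-\mathbf{i}\tau^t)}.
\end{equation*}
Both sides of the theorem thus reduce to the reality of one and the same complex number multiplied by $e^{\mathbf{i}\theta}$, which yields the equivalence for the same $\theta$. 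I expect the main obstacle to be pinning down the complex side: identifying $\tilde{\omega}_{\tau}^{n}$ as the correct reference volume for the phase, which is equivalent to observing that the B-field contributions cancel in $\omega-\Omega_{(s,a,q)}^{\mathcal{G}_{\tau}}=\tilde{\omega}_{\tau}-d\omega_{(s,a,q)}^{\mathcal{G}_{\tau}}$, is precisely the step that forces the two phases to agree for the same $\theta$, and it is here that the gerby deformation genuinely enters; the remaining manipulations are routine determinant algebra.
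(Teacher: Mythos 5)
Your two computations are exactly the paper's: the paper derives $(\omega-\Omega_{(s,a,q)}^{\mathcal{G}_{\tau}})^n=(2\pi\mathbf{i})^n n!\,\mathrm{det}\{-\mathbf{i}Y+S^t\}\,dx_1\wedge dy_1\wedge\cdots\wedge dx_n\wedge dy_n$ (its (\ref{dhymc})) and $\check{\Omega}(\xi_{\tau}^1,\ldots,\xi_{\tau}^n)=\mathrm{det}\bigl[(-\tau-\mathbf{i}Y^t)^{-1}(-\mathbf{i}Y^t+S)\bigr]$ (its (\ref{splag})), where $S:=\frac{\partial s}{\partial x}(x)$, using the same cancellation $I_n+P\tau=-\mathbf{i}PY^t$, $P:=(-\tau-\mathbf{i}Y^t)^{-1}$, that you use, and then concludes ``by comparing'' the two. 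Your observation that the $dy\wedge dy$ part drops out of the top power, and all of your determinant identities, including $\frac{\mathrm{det}(S-\mathbf{i}Y^t)}{\mathrm{det}(-\tau-\mathbf{i}Y^t)}=\frac{\mathrm{det}(Y+\mathbf{i}S^t)}{\mathrm{det}(Y-\mathbf{i}\tau^t)}$, check out.

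Where you genuinely depart from the paper is your step of dividing the complex side by $\tilde{\omega}_{\tau}^n$. The paper does not do this: it compares the unnormalized $2n$-form (\ref{dhymc}) directly with (\ref{splag}). You should be aware that your step therefore \emph{modifies} the stated equation rather than follows from it: read literally, Definition \ref{dhymeq} and the theorem assert $\mathrm{Im}\,e^{\mathbf{i}\theta}(\omega-\Omega_{(s,a,q)}^{\mathcal{G}_{\tau}})^n=0$ as an equation of forms, which reduces to $e^{\mathbf{i}\theta}\mathrm{det}(Y+\mathbf{i}S^t)\in\mathbb{R}$, while the special Lagrangian condition reduces to $e^{\mathbf{i}\theta}\mathrm{det}(Y+\mathbf{i}S^t)/\mathrm{det}(Y-\mathbf{i}\tau^t)\in\mathbb{R}$; these differ by the constant phase $\arg\mathrm{det}(Y-\mathbf{i}\tau^t)$, which vanishes for $\tau=0$ but not in general (already for $n=1$, $Y=y>0$, $\tau\neq 0$). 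So under the literal reading the ``same $\theta$'' assertion holds only after a constant shift of phase --- a point the paper's one-line comparison passes over silently --- and the ``for some $\theta$'' versions of the two conditions are equivalent under either reading. Your normalization by $\tilde{\omega}_{\tau}^n$ (equivalently, your identity $\omega-\Omega_{(s,a,q)}^{\mathcal{G}_{\tau}}=\tilde{\omega}_{\tau}-d\omega_{(s,a,q)}^{\mathcal{G}_{\tau}}$) is precisely the reinterpretation that makes the two reduced complex numbers literally equal, so it buys an exact same-$\theta$ equivalence at the cost of replacing the definition as written; the paper sticks to the written definition but its comparison is only exact up to the constant $\mathrm{det}(Y-\mathbf{i}\tau^t)$. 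You would strengthen your write-up by flagging this explicitly as a correction (or at least a nontrivial reading) of the statement, rather than presenting it as the ``natural reading'' in passing.
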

\begin{proof}
Note that the K$\ddot{\mathrm{a}}$hler form on $X_{\mathcal{G}_{\tau}}^n$ is given by
\begin{equation*}
\omega=2\pi dx^t Y dy+2\pi dy^t X^tY dy
\end{equation*}
($\tilde{\omega}_{\tau}$ is a complexified K$\ddot{\mathrm{a}}$hler form on $X_{\mathcal{G}_{\tau}}^n$) and the curvature form $\Omega_{(s,a,q)}^{\mathcal{G}_{\tau}}$ of the twisted integrable connection $\nabla_{(s,a,q)}^{\mathcal{G}_{\tau}}$ is expressed locally as
\begin{equation*}
\Omega_{(s,a,q)}^{\mathcal{G}_{\tau}}=-2\pi \mathbf{i} dx^t \left( \frac{\partial s}{\partial x}(x) \right)^t dy
\end{equation*}
as explained in the proof of Proposition \ref{twhol}. Therefore, we have
\begin{align*}
\omega-\Omega_{(s,a,q)}^{\mathcal{G}_{\tau }}&=2\pi dx^t \left\{ Y+\mathbf{i}\left( \frac{\partial s}{\partial x}(x) \right)^t \right\}dy+2\pi dy^t X^t Y dy \\
&=2\pi \mathbf{i} dx^t \left\{ -\mathbf{i}Y+\left( \frac{\partial s}{\partial x}(x) \right)^t \right\} dy+2\pi dy^t X^t Y dy,
\end{align*}
and this implies
\begin{equation}
\bigl( \omega-\Omega_{(s,a,q)}^{\mathcal{G}_{\tau }} \bigr)^n=(2\pi \mathbf{i})^n n! \mathrm{det}\left\{ -\mathbf{i}Y+\left( \frac{\partial s}{\partial x}(x) \right)^t \right\}dx_1\wedge dy_1\wedge \cdots \wedge dx_n \wedge dy_n. \label{dhymc}
\end{equation}

On the other hand, we can rewrite each vector $\xi_{\tau}^j$ ($j=1,\cdots,n$) in the proof of Proposition \ref{twistedfukob} to
\begin{equation*}
\xi_{\tau}^j=\frac{\partial}{\partial z^j}+\frac{\partial}{\partial \bar{z}^j}+\left( \frac{\partial}{\partial \check{z}}^t (-\tau-\mathbf{i}Y^t)^{-1}+\frac{\partial}{\partial \bar{\check{z}}}^t \overline{(-\tau-\mathbf{i}Y^t)^{-1}} \right) \left\{ \left( \frac{\partial s}{\partial \check{x}}(\check{x}) \right)_j+\tau _j \right\}
\end{equation*}
by using the relation
\begin{equation*}
\left( \frac{\partial}{\partial \check{x}}^t, \frac{\partial}{\partial \check{y}}^t \right)=\left( \frac{\partial}{\partial \check{z}}^t, \frac{\partial}{\partial \bar{\check{z}}}^t \right) \left( \begin{array}{cccccc} I_n & (-\tau-\mathbf{i}Y^t)^{-1} \\ I_n & \overline{(-\tau-\mathbf{i}Y^t)^{-1}} \end{array} \right).
\end{equation*}
Then, we have
\begin{align}
\check{\Omega}(\xi_{\tau}^1,\cdots, \xi_{\tau}^n)&=\mathrm{det}\left( \begin{array}{cccccc} dz^1(\xi^1) & \cdots & dz^1(\xi^n) \\
\vdots & \ddots & \vdots \\
dz^n(\xi^1) & \cdots & dz^n(\xi^n) \end{array} \right) \notag \\
&=\mathrm{det}\left( I_n+(-\tau-\mathbf{i}Y^t)^{-1}\left( \frac{\partial s}{\partial \check{x}}(\check{x})+\tau \right) \right) \notag \\
&=\mathrm{det}(-\tau-\mathbf{i}Y^t)^{-1}\left( -\mathbf{i}Y^t+\frac{\partial s}{\partial \check{x}}(\check{x}) \right). \label{splag}
\end{align}

Thus, we obtain the statement by comparing the relation (\ref{dhymc}) with the relation (\ref{splag}).
\end{proof}
Theorem \ref{dhymsplag} leads ``a gerby deformed version'' of the correspondence between deformed Hermitian-Yang-Mills connections and special Lagrangian submanifolds in the case of tori, namely, for given mirror dual objects $(E_{(s,a,q)}^{\mathcal{G}_{\tau}},\nabla_{(s,a,q)}^{\mathcal{G}_{\tau}})\in DG_{X_{\mathcal{G}_{\tau}}^n}$ and $(L_{(s,a)}^{\tau},\mathcal{L}_{(s,a,q)}^{\tau})\in Fuk_{\rm sub}(\check{X}_{\mathcal{G}_{\tau}}^n)$ in the sense of Proposition \ref{mainth}, we obtain the following conclusion : the twisted integrable connection $\nabla_{(s,a,q)}^{\mathcal{G}_{\tau}}$ defines a deformed Hermitian-Yang-Mills connection on the twisted holomorphic line bundle $E_{(s,a,q)}^{\mathcal{G}_{\tau}}\rightarrow X_{\mathcal{G}_{\tau}}^n$ if and only if the Lagrangian submanifold $L_{(s,a)}^{\tau}$ becomes a special Lagrangian submanifold in $\check{X}_{\mathcal{G}_{\tau}}^n$.
\begin{rem}
In general, to determine a $\theta\in \mathbb{R}$ satisfying the definitions of deformed Hermitian-Yang-Mills connections and special Lagrangian submanifolds is a difficult problem although we do not treat it in this paper. At least, we can take such a $\theta\in \mathbb{R}$ in the case that a given locally defined smooth function $s$ is affine.
\end{rem}

\section*{Acknowledgment}
I am grateful to Hiroshige Kajiura for helpful comments. I also would like to thank Manabu Akaho for telling me the notion of deformed Hermitian-Yang-Mills connections. Finally, I am grateful to the referee for reading this paper carefully. This work is supported by JSPS KAKENHI Grant Number 21H04994.

\end{document}